
\documentclass[onecolumn]{autart} 

\usepackage{graphicx,color}%
\usepackage{amsmath}
\usepackage{amssymb}

\newtheorem{theorem}{Theorem}[section]
\newtheorem{corollary}[theorem]{Corollary}
\newtheorem{lemma}[theorem]{Lemma}
\newtheorem{proposition}[theorem]{Proposition}

\newtheorem{example}{Example}
\newtheorem{remark}{Remark}[section]

\newtheorem{assumption}{Assumption}


\newcommand{\until}[1]{\{1,\dots, #1\}}
\newcommand{\subscr}[2]{#1_{\textup{#2}}}

\newcommand{\setdef}[2]{\{#1 \; | \; #2\}}

\newcommand{\map}[3]{#1: #2 \rightarrow #3}

\newcommand{\eps}{\varepsilon}

\newcommand{\naturals}{\mathbb{N}} 
\newcommand{\integers}{\mathbb{Z}} 
\newcommand{\reals}{\mathbb{R}} 

\newcommand{\integernonnegative}{\ensuremath{\mathbb{Z}}_{\ge 0}}
\newcommand{\real}{\ensuremath{\mathbb{R}}}

\newcommand{\integer}{{\mathbb{Z}}}
\newcommand{\Z}{\mathbb{Z}}

\newcommand{\R}{\reals}

\renewcommand{\Pr}{\mathbb{P}} 
\newcommand{\Exp}{\mathbb{E}} 

\newcommand{\G}{\mathcal{G}}    
\newcommand{\E}{\mathcal{E}}    
\newcommand{\V}{\mathcal{V}}    
\newcommand{\xave}{\subscr{x}{ave}}
\newcommand{\nave}{\subscr{n}{ave}}
\newcommand{\Tcon}{\subscr{T}{con}}
\newcommand{\Tall}{\subscr{T}{all}}
\newcommand{\1}{\mathbf{1}} 
\newcommand{\diag}{\operatorname{diag}} 
\newcommand{\trace}{\operatorname{tr}}

\newcommand{\A}{{\mathcal{A}}}       

\newcommand{\I}{{\mathcal{I}}}
\renewcommand{\S}{{\mathcal{S}}}
\newcommand{\RR}{{\mathcal{R}}}
\newcommand{\NomPart}{{\mathcal{N}}}
\newcommand{\QuantPart}{{\mathcal{Q}}}

\newcommand{\qd}{q_{d}} 
\newcommand{\qdtilde}{{\tilde q}_{d}} 
\newcommand{\qp}{q_{p}}

\newcommand{\gii}{g_{1}}
\newcommand{\giii}{g_{2}}
\newcommand{\giv}{g_{3}}
\newcommand{\gvi}{g_{4}}
\newcommand{\gvii}{g_{5}}


\begin{document}
\begin{frontmatter}
\title{
Gossip consensus algorithms via quantized communication\thanksref{footnoteinfo}}
\thanks[footnoteinfo]{This paper was not presented at any IFAC meeting. Corresponding author P.~Frasca Tel.~+39-011-5647552.
The authors want to thank the anonymous reviewers for their constructive comments.
}

\author[SB]{Ruggero Carli}\ead{carlirug@engineering.ucsb.edu},
\author[Poli]{Fabio Fagnani}\ead{fabio.fagnani@polito.it.},
\author[Poli]{Paolo Frasca}\ead{paolo.frasca@polito.it},
\author[DEI]{Sandro Zampieri}\ead{zampi@unipd.it},

\address[SB]{Center for Control, Dynamical Systems and Computation, University of
    California, Santa Barbara, CA 93106, USA}
\address[Poli]{Dipartimento di Matematica, Politecnico
    di Torino, C.so Duca degli Abruzzi,~24, 10129 Torino, Italy}
\address[DEI]{DEI, Universit\`a di Padova, Via~Gradenigo~6/a, 35131 Padova, Italy}

\begin{keyword}
Average consensus; quantization; Markov chain.
\end{keyword}

\begin{abstract}
This paper considers the average consensus problem on a network of digital links, and proposes a set of algorithms based on pairwise ``gossip'' communications and updates. We study the convergence properties of such algorithms with the goal of answering two design questions, arising from the literature: whether the agents should encode their communication by a deterministic or a randomized quantizer, and whether they should use, and how, exact information regarding their own states in the update.
\end{abstract}
\end{frontmatter}

\section{Introduction}
In the latest years, algorithms to solve consensus problems have attracted a lot of interest. In a consensus problem a group of agents has to agree about a certain quantity, starting from different initial estimates.  A special interest is devoted to average consensus, where the agents are requested to agree on the average of their initial estimates. Among the vast literature, we refer the reader to \cite{ROS-JAF-RMM:07} and references therein.
The difficulty of the problem resides in the communication constraints which are given to the agents. Such communication constraints are usually represented by a graph: nodes are agents and edges are available communication links. Moreover, the communication across the links can be assumed to be perfect, or rather be digital and possibly subject to bandwidth constraints, interferences, erasures, packet losses, noise, delays. Among the many algorithms for consensus proposed in the literature, particularly interesting is the so called gossip algorithm: at every time instant a randomly chosen pair of agents communicates and they average their states. Such algorithm, studied in detail in \cite{SB-AG-BP-DS:06}, has many appealing features: it reduces the number of communications with respect to deterministic algorithms and avoids data collision. The present paper is devoted to the adaptation of the gossip algorithm to a network of digital lossless channels, that is, subject to quantized communication.

\subsection*{Related works}
The constraint of quantization, due to the use of digital channels or to computing and memory constraints, has been considered in consensus problems in several recent papers \cite{LX-SB-SL:05,RC-FF-PF-TT-SZ:07,SK-JM:sub,AK-TB-RS:07,RC-FB-SZ:08j,TCA-MJC-MGR:08,PF-RC-FF-SZ:08,AN-AO-AO-JNT:08,MZ-SM:08a}.
The present work differs from previous ones on many respects. With the notable exception of \cite{AK-TB-RS:07}, and of the recent conference \cite{MZ-SM:08a}, quantization has not been investigated in the context of gossip algorithm. On the other hand, the work \cite{AK-TB-RS:07}, which has been an important reference to us, deals with a strictly related but different problem, that is, consensus of agents having quantized {\em states}, while our interest is in quantized communication among real-valued agents. The case of gossip algorithm is worth of consideration in the context of quantized communication, since it can not be reduced to the case of time-invariant communication. For instance, gossiping introduces randomness in the algorithm. This allows using probabilistic tools in the analysis, and can significantly improve the convergence properties (in a probabilistic sense). Indeed, it prevents the onset of periodic dynamics, as those noticed in the fixed-topology communication since \cite{RC-FF-PF-TT-SZ:07}.

One could believe that the finite-bandwidth communication constraint can be easily dealt with using incremental and adaptive quantizers, in which the finite-length messages encode the last update rather than the agents state.
Actually, such approach to consensus algorithm with quantized communication has been undertaken in \cite{RC-FB-SZ:08j}, assuming a static communication network and logarithmic or zooming-in/zooming-out quantizers. However, in a gossip scenario the random time-dependence of the active communication links makes difficult to design a similar algorithm: hence the analysis of static quantizers is valuable. For these reasons, in this paper we shall consider static uniform quantizers.

\subsection*{Statement of contributions}
The goal of this paper is to analyze the effects of quantized communication on the gossip algorithm \cite{SB-AG-BP-DS:06}, or, equivalently, the opportunity of using a gossip communication when quantization of the messages is imposed. The agents' states are assumed to be real numbers, while the transmitted messages belong to a finite set. 
Through both analytical results and simulations, we investigate two design questions: whether the agents should use the deterministic or the probabilistic quantizer, and whether they should use, and how, exact information regarding their own states in the update. This is done in the following way: on one hand, we compare a {\em deterministic} uniform quantizer and a {\em probabilistic} uniform quantizer; on the other hand, we consider three different update rules ({\em partially quantized}, {\em totally quantized}, {\em compensating}), which differ in how the agents use the information about their own state. Both the quantizers and the update rules are introduced in Section~\ref{sec:Statement}.

Our results, which describe the limit behavior of the algorithms, are obtained by two different techniques. For the compensating rule with probabilistic quantization, we give a mean squared error analysis and convergence is proved as time goes to infinity. In all the other cases, we study a suitable Markov chain symbolic dynamics, obtaining results of convergence in finite time. Such a fact is remarkable, since it underlines the discrete nature of the problem, in spite of the state space being continuous.
In more detail, we can summarize our results as follows. In Section~\ref{sec:GlobQA}, we show that the totally quantized rule ensures that the consensus is reached almost surely in a finite time, both using the deterministic quantizer and the probabilistic quantizer. The drawback of this update rule is that it does not preserve the average of the initial conditions, and the deviation from the initial average happens to be possibly large if the deterministic quantizer is used. On the other hand, the compensating rule preserves the initial average at each iteration of the algorithm, but does not guarantee that consensus is reached. However, we prove in Section~\ref{sec:PartQA} that the states get as close to average consensus as the size of quantization step, in a finite time. Finally, in Section~\ref{sec:Naive} we consider the partially quantized rule. While it does not preserve the initial average, it has good convergence properties. Indeed, if the deterministic quantizer is used, the states get in a finite time as close to consensus as the size of the quantization step. If the probabilistic quantizer is used, we can argue a stronger result of asymptotical convergence to consensus, and the expectation of the consensus value is the average of the initial states.

\section{Problem statement}\label{sec:Statement}
We start recalling the gossip average consensus algorithm: at every time step, a randomly chosen pair of agents communicates, and they average their states. This algorithm, brought to wide audience by \cite{SB-AG-BP-DS:06}, has many appealing features: it reduces the number of communications with respect to deterministic algorithms and avoids data collisions.
Let us describe such algorithm in more detail.
Assume we are given an undirected connected graph $\G=(\V,\E)$, with $\V=\until{N}$ and ${\E}\subset \{(i,j): i, j \in \V\}$. Each of the nodes of the graph is referred to as an {\em agent}, and endowed with a state, which is scalar function of time, $x_i(t),$ for $i\in \V$. The values $x_i(0)$ are given. At each time step $t\in \integernonnegative$, one edge $(i,j)$ is randomly selected in $\E$ with positive probability $W^{(i,j)}$ such that $\sum_{(i,j)\,\in\,\E}W^{(i,j)}=1$. Let $W$ be the matrix with entries $W_{ij}=W^{(i,j)}$. The two agents connected by the selected edge average their states according to
\begin{align}\label{eq:GossipStandard}
x_i(t+1)&= \frac{1}{2} x_i(t)+\frac{1}{2} x_j(t)\nonumber\\
x_j(t+1)&= \frac{1}{2} x_j(t)+\frac{1}{2} x_i(t)
\end{align}
while\begin{align}\label{eq:GossipStandardN}
x_h(t+1)=x_h(t) \qquad \mbox{ if $h\neq i,j$. }
\end{align}
Let $E_{ij}=(e_i-e_j)(e_i-e_j)^*$
and\footnote{The symbol $M^*$ is used to denote the conjugate transpose of the matrix $M$. }
\begin{equation}\label{eq:defPt}
P(t)=I-\frac{1}{2}E_{ij}
\end{equation}
where $e_i=[0,\ldots,0,1,0,\ldots,0]^*$ is a $N\times1$ unit vector with the $i$-th component equal to $1$. Then
(\ref{eq:GossipStandard}) and (\ref{eq:GossipStandardN}) can be
written in a vector form as
\begin{equation}\label{eq:GossipStandardVect}
x(t+1)=P(t)x(t),
\end{equation}
where $x(t)=[x_1(t),\ldots,x_N(t)]^*.$
Note that $P(t)$ is a symmetric doubly stochastic matrix, and then \eqref{eq:GossipStandardVect} preserves the average of states. It is known~\cite{SB-AG-BP-DS:06,ATS-AJ:08} that, if the graph $\G$ is connected and each edge $(i,j)\,\in\,\E$ is selected with a strictly positive probability, then, for every initial condition $x(0)$, the algorithm $(\ref{eq:GossipStandardVect})$ almost surely reaches the {\em average consensus}. That is, almost surely $$\lim_{t\rightarrow \infty}x(t)=\xave(0)\1,$$ where $\1$ is the column vector whose entries are 1, and for $t\in \integernonnegative$, we define $\xave(t)=N^{-1}\sum_{i=0}^{N}x_i(t).$ If instead the weaker condition holds that $\lim_{t\rightarrow \infty}x(t)=\xi\1,$ for some $\xi\in \real$, we say that the algorithm reaches {\em consensus}.

Note that the gossip algorithm \eqref{eq:GossipStandardVect} relies upon a crucial assumption: each agent transmits to its neighboring agents the precise value of its state. Instead, in this paper we assume that the communication network is constituted of digital links. This prevents the agents from having a precise knowledge about the state of the other agents. Indeed, through a digital channel, the $i$-th agent can only send to its neighbors symbolic
data: using  this data, the neighbors of the $i$-th agent can build an estimate of the $i$-th agent's
state. We denote this estimate by ${\hat x}_i(t)$, and  let $\hat{x}(t)=\left[{\hat x}_1(t), \ldots, {\hat x}_N(t) \right]^*.$ In this paper, the estimate is simply the received symbol, computed via a suitable quantizer, that is an application mapping real numbers into a discrete set.

\subsection{Quantizers}
The first design issue is how to quantize the states, that is how to map the continuous space of states into a discrete alphabet of messages. Given $X\subseteq\reals^\nu$, we call quantizer a map $\map{q}{X}{S}$, where $S$ is a finite or countable set, endowed with the discrete topology. If we have a vector $x\in X^N$, with a slight abuse of notation, we will use the notation $q(x)\in S^N$ to denote the vector such that $q(x)_i=q(x_i)$.
Many quantizers have been proposed in the vast literature on the subject \cite{GNN-FF-SZ-RJE:07}: here we concentrate on uniform quantizers with a countable alphabet, which can be thought as maps $\map{q}{\real}{\integer}$, up to a suitable rescaling. Two of such quantizers are of special interest to us, which we define below, and we call the deterministic and the probabilistic quantizer. A broader discussion is given in the final section.
The {\em deterministic quantizer} is defined as follows.
Let $\map{\qd }{\R}{\Z} $ be the map which sends $z\in \R$ into its nearest integer, namely,
\begin{equation*}
\qd (z)=n\in\Z\quad\Leftrightarrow
\begin{array}{c}
  \quad z\,\,\in\,\,[n-1/2,n+1/2[,\:\mbox{ if } z\geq 0\\
  \quad z\,\,\in\,\,]n-1/2,n+1/2],\:\mbox{ if } z< 0.\\
\end{array}
\end{equation*}
This map enjoys the property that, for all $z\in \R$, it holds $|z-\qd(z)|\le \frac{1}{2}.$

The {\em probabilistic quantizer} $\map{\qp }{\R}{\Z}$ is defined as follows.\footnote{Elsewhere in the literature \cite{TCA-MJC-MGR:08,SK-JM:sub,JJX-ZQL:05} this quantizer has been introduced as a result of {\em dithering}, that is the addition of a small random noise before (deterministic) quantization. } For any $x\,\in\, \reals$, the image $\qp(x)$ is a random variable on $\integers$ defined by
\begin{equation}\label{eq:qp}
\qp(x)=\left\{
\begin{array}{rcl}
\lfloor x \rfloor & \mbox{with probability} & \lceil x \rceil-x\\
\lceil x \rceil & \mbox{with probability} & x-\lfloor x \rfloor.\\
\end{array}
\right.
\end{equation}
In this case,  for all $z\in \R$, it holds that $|z-\qp(z)|\le 1.$
The following lemma states two important properties of the probabilistic quantizer.
\begin{lemma}\label{lem:PropQuantProb}
For every $x\,\in\,\reals$, it holds that
\begin{align*}
&\Exp\left[\qp(x)\right]=x,\\
&\Exp\left[\left(x-\qp(x)\right)^2\right]\leq \frac{1}{4}.
\end{align*}
\end{lemma}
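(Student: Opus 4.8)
The plan is to reduce the statement to a direct computation with the two-point distribution that defines $\qp$. First I would introduce the fractional part $\theta := x - \lfloor x \rfloor \in [0,1)$, so that $x = \lfloor x \rfloor + \theta$. If $\theta = 0$ then $x$ is an integer, $\lfloor x \rfloor = \lceil x \rceil = x$, and $\qp(x)=x$ deterministically, making both claims trivial; so I would assume $0 < \theta < 1$. In that case $\lceil x \rceil = \lfloor x \rfloor + 1$, the probability $\lceil x \rceil - x$ becomes $1-\theta$, and the probability $x - \lfloor x \rfloor$ becomes $\theta$. Thus $\qp(x)$ is the random variable taking the value $\lfloor x \rfloor$ with probability $1-\theta$ and $\lfloor x \rfloor + 1$ with probability $\theta$.

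For the first identity I would simply evaluate the mean of this two-point distribution:
\begin{equation*}
\Exp[\qp(x)] = \lfloor x \rfloor (1-\theta) + (\lfloor x \rfloor + 1)\,\theta = \lfloor x \rfloor + \theta = x,
\end{equation*}
which is exactly the unbiasedness claim.

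For the second estimate I would record the quantization error on each branch: when $\qp(x) = \lfloor x \rfloor$ the error $x - \qp(x)$ equals $\theta$, while when $\qp(x) = \lfloor x \rfloor + 1$ it equals $\theta - 1 = -(1-\theta)$. Squaring and taking the expectation over the same two-point law yields
\begin{equation*}
\Exp\big[(x - \qp(x))^2\big] = \theta^2 (1-\theta) + (1-\theta)^2 \theta = \theta(1-\theta).
\end{equation*}
It then remains to bound $\theta(1-\theta)$ on $[0,1]$; since this downward parabola attains its maximum value $1/4$ at $\theta = 1/2$, the claimed bound $\Exp[(x-\qp(x))^2] \le \frac14$ follows.

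I do not expect a genuine obstacle here: the result is an immediate consequence of $\qp$ being supported on the two integers adjacent to $x$ with the prescribed linear probabilities, so that the mean coincides with $x$ and the mean squared error is the variance of a Bernoulli-type split. The only point requiring a line of care is the degenerate integer case, where the two support points coincide and the probabilities formally vanish; this is handled once at the outset. Everything else is elementary bookkeeping, and I would note in passing that the constant $\frac14$ is sharp, being attained precisely when $x$ is a half-integer.
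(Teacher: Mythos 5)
Your proof is correct and follows essentially the same route as the paper: a direct computation of the mean and mean squared error of the two-point distribution defining $\qp$, with the bound $\le \frac{1}{4}$ following from maximizing the resulting quadratic (the paper writes the error as $x\lfloor x\rfloor + x\lceil x\rceil - \lfloor x\rfloor\lceil x\rceil - x^2$, which equals your $\theta(1-\theta)$). Your explicit treatment of the integer case and the sharpness remark are minor additions, not a different method.
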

\begin{proof}
The first equation is immediate, and the second one follows from computing
$$\Exp\left[\left(x-\qp(x)\right)^2\right]=x \lfloor x\rfloor+ x \lceil x\rceil-\lfloor x\rfloor \lceil x\rceil -x^2\le \frac{1}{4}.$$
\end{proof}

The quantizers defined above map ${\reals}$ into ${\integers}$, and have quantization bins of length $1$. More general uniform quantizers, having as quantization step a positive real number $\eps$, can be obtained from $\map{q}{\reals}{\integers}$ by defining $q^{(\eps)}(x)= \eps q(x/\eps)$. Hence, the general case can be simply recovered by a suitable scaling, and our choice is not restrictive.

\subsection{Update rules}
We introduce {\em three updating rules} for the states, which require quantized communication. To describe them, let us assume that $(i,j)$ be the edge selected at the $t$-th iteration.

\begin{itemize}
\item[a)] {\em (Totally quantized)}
In the first strategy, $i$ and $j$, in order to update their states, use only their estimates, as follows,
\begin{align}\label{eq:GossipQuant1}
x_i(t+1)&= \frac{1}{2} \hat{x}_i(t)+\frac{1}{2} \hat{x}_j(t)\nonumber\\
x_j(t+1)&= \frac{1}{2} \hat{x}_j(t)+\frac{1}{2} \hat{x}_i(t),
\end{align}
or, equivalently in vector form, by recalling the definition of $P(t)$ given in \eqref{eq:defPt},
\begin{equation}\label{eq:GossipQuantVect1}
x(t+1)=P(t)\hat{x}(t).
\end{equation}

\item[b)] {\em (Partially quantized)}
If instead the agents have access to their real-valued state, a natural update rule is
\begin{align}\label{eq:GossipQuant3}
x_i(t+1)&= \frac{1}{2} x_i(t)+\frac{1}{2} \hat{x}_j(t)\nonumber\\
x_j(t+1)&= \frac{1}{2} x_j(t)+\frac{1}{2} \hat{x}_i(t),
\end{align}
or, equivalently in vector form,
\begin{equation}\label{eq:GossipQuantVect3}
x(t+1)=\frac{1}{2}x(t)+(P(t)-\diag P(t))\hat{x}(t).
\end{equation}

\item[c)] {\em (Compensating)}
Both the above update rules do not preserve the average of states, which can be a significant drawback in some applications. To cope with this problem, we propose a third update rule, in which agents use both their real-valued states {\em and} their quantized values. To understand the idea behind this, consider the standard gossip update \eqref{eq:GossipStandard}, which can be rewritten as
\begin{align}\label{}
x_i(t+1)&= x_i(t)-\frac{1}{2} x_i(t)+\frac{1}{2} x_j(t)\nonumber\\
x_j(t+1)&= x_j(t)-\frac{1}{2} x_j(t)+\frac{1}{2} x_i(t).\nonumber
\end{align}
We then propose the following updating rule,
\begin{align}\label{eq:GossipQuant2}
x_i(t+1)&= x_i(t)-\frac{1}{2} \hat{x}_i(t)+\frac{1}{2} \hat{x}_j(t)\nonumber\\
x_j(t+1)&= x_j(t)-\frac{1}{2} \hat{x}_j(t)+\frac{1}{2} \hat{x}_i(t),
\end{align}
or, equivalently in vector form,
\begin{equation}\label{eq:GossipQuantVect2}
x(t+1)=x(t)+(P(t)-I)\hat{x}(t).
\end{equation}
\end{itemize}
Note that in facts law~\eqref{eq:GossipQuantVect2} preserves the initial state average, as the law~\eqref{eq:GossipStandardVect} does. Formally, defining $\xave(t)=\frac{1}{N}\1^*x(t)$, we have that $\xave(t)=\xave(0)$, for all $t\geq 0$. Indeed, $ \1^*x(t+1)=\1^*x(t)+\1^*(P(t)-I){\hat x}(t)=\1^*x(t)$, where the last equality follows from the fact that, since $P(t)$ is doubly stochastic for all $t\geq 0$, then $\1^*(P(t)-I)=0$ for all $t\geq 0$.
The idea of the agents updating their own state using both exact and quantized information about their own state at the previous time step, has already been shown useful in the case of quantized consensus on a time-independent network \cite{PF-RC-FF-SZ:08}. From the point of view of communication theory, this strategy is meant to fully exploit the implicit channel feedback which comes from quantization: since communication is quantized, but the channels are reliable, each agent knows that its neighbors are going to receive the message it has transmitted.

In the sequel, we refer to (\ref{eq:GossipQuantVect1}) as the {\em totally quantized} rule, to \eqref{eq:GossipQuantVect3} as the {\em partially quantized} rule, and to (\ref{eq:GossipQuantVect2}) as the {\em compensating} rule.
In the following sections, we proceed with a detailed analysis of the dynamical systems induced by the above rules.

\section{Compensating update}\label{sec:PartQA}
We start our analysis from the update rule \eqref{eq:GossipQuantVect2}, considering first the case of deterministic, and then of probabilistic quantizers.
\subsection{Deterministic quantizer}
Consider the compensating strategy
\begin{align}\label{eq:GossipQuant2d}
x_i(t+1)&= x_i(t)-\frac{1}{2} \qd(x_i(t))+\frac{1}{2} \qd(x_j(t))\nonumber\\
x_j(t+1)&= x_j(t)-\frac{1}{2} \qd(x_j(t))+\frac{1}{2} \qd(x_i(t)).
\end{align}

The limit behavior of the above algorithm can be studied exploiting a natural {\it symbolic dynamics}
interpretation of the states dynamics, obtaining results which are reminiscent of those in \cite{AK-TB-RS:07}.
We define $n_i(t)=\lfloor 2 x_i(t)
\rfloor$ for all $i\in \V$, and let $n(t)=\left[n_1(t), \ldots,
n_N(t)\right]^*$.

To start, we need the following technical lemma, whose proof can be found in \cite{PF-RC-FF-SZ:08}.
\begin{lemma}\label{lem:FormulasFloor}
Given $\alpha,\beta\, \in \naturals$ and $x\in \reals$, it
holds \begin{align*} 
&\lfloor
x\rfloor=\left\lfloor
\frac{\lfloor\alpha x\rfloor}{\alpha}\right\rfloor,\\
&q_d(x)=\lfloor
x+1/2\rfloor=\left\lceil\frac{1}{2}\left\lfloor \frac{\lfloor 2
\beta x \rfloor}{\beta}\right\rfloor \right\rceil.
\end{align*}
\end{lemma}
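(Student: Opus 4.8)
The plan is to prove the two identities separately, deriving the second one from the first. For the first identity, I would decompose $x = m + f$ with $m = \lfloor x\rfloor \in \integers$ and fractional part $f \in [0,1)$. Since $\alpha \in \naturals$ is a positive integer, $\alpha x = \alpha m + \alpha f$ with $\alpha m \in \integers$, so $\lfloor \alpha x\rfloor = \alpha m + \lfloor \alpha f\rfloor$, and $\lfloor \alpha f\rfloor \in \{0,\dots,\alpha-1\}$ because $\alpha f \in [0,\alpha)$. Dividing by $\alpha$ gives $\lfloor \alpha x\rfloor/\alpha = m + \lfloor \alpha f\rfloor/\alpha$, where the second summand lies in $[0,(\alpha-1)/\alpha]\subset [0,1)$; applying the outer floor therefore returns $m = \lfloor x\rfloor$, as claimed.

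For the second identity, I would first record that $q_d(x) = \lfloor x+1/2\rfloor$ is merely a rewriting of the defining intervals of the nearest-integer map: for $z\ge 0$, the condition $z\in[n-1/2,n+1/2)$ is equivalent to $z+1/2\in[n,n+1)$, i.e. $\lfloor z+1/2\rfloor = n$, and the case $z<0$ is symmetric up to the harmless tie convention at half-integers. The substantive part is the rightmost equality. Here I would exploit the first identity with $\alpha=\beta$ applied to the point $2x$: since $\lfloor \beta(2x)\rfloor = \lfloor 2\beta x\rfloor$, it yields $\lfloor \lfloor 2\beta x\rfloor/\beta\rfloor = \lfloor 2x\rfloor$, which collapses the inner nested floor. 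It therefore remains only to establish the parameter-free identity $\lfloor x+1/2\rfloor = \lceil \lfloor 2x\rfloor/2\rceil$.

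This last step I would settle by a parity case analysis on $m := \lfloor 2x\rfloor$. Writing $2x = m + g$ with $g\in[0,1)$: if $m=2k$ is even, then $x+1/2 = k+1/2+g/2 \in [k+1/2,k+1)$, so $\lfloor x+1/2\rfloor = k$, while $\lceil m/2\rceil = \lceil k\rceil = k$; if $m=2k+1$ is odd, then $x+1/2 = k+1+g/2 \in [k+1,k+3/2)$, so $\lfloor x+1/2\rfloor = k+1$, while $\lceil m/2\rceil = \lceil k+1/2\rceil = k+1$. In both cases the two sides agree, which closes the argument.

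As for difficulty, no single step is deep; the lemma is essentially careful bookkeeping with floors and ceilings. The one genuinely useful observation, and the step I would flag as the crux, is that the inner nested floor $\lfloor \lfloor 2\beta x\rfloor/\beta\rfloor$ is precisely the left-hand side of the first identity evaluated at $2x$, so the two statements are not independent: the first identity eliminates the $\beta$-dependence of the second and reduces it to a clean parity computation. The only point deserving a word of caution is the boundary behaviour at half-integers in the identification $q_d(x)=\lfloor x+1/2\rfloor$, which does not affect the substance of the result.
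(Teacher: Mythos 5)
Your proof is correct, but there is nothing in the paper to compare it against: the paper states this lemma as a known technical fact and defers its proof entirely to the reference \cite{PF-RC-FF-SZ:08}, so your self-contained argument is a genuine addition rather than a rederivation. Its structure is the natural one: the decomposition $x=\lfloor x\rfloor+f$ with $f\in[0,1)$ disposes of the first identity, and your crux --- recognizing that $\left\lfloor\lfloor 2\beta x\rfloor/\beta\right\rfloor$ is the first identity evaluated at $2x$ with $\alpha=\beta$, which eliminates $\beta$ and reduces the second identity to the parameter-free statement $\lfloor x+1/2\rfloor=\left\lceil\lfloor 2x\rfloor/2\right\rceil$, settled by a two-case parity check --- is exactly the right reduction. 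One sharpening of your closing caveat: the tie issue you flag is a defect of the lemma's statement relative to the paper's own definition of $\qd$, not a gap in your proof. The paper's definition rounds negative half-integers away from zero, so $\qd(-1/2)=-1$, whereas $\lfloor -1/2+1/2\rfloor=0$ and $\left\lceil\frac{1}{2}\left\lfloor\lfloor 2\beta\cdot(-1/2)\rfloor/\beta\right\rfloor\right\rceil=\lceil -1/2\rceil=0$; hence the chain of equalities in the lemma genuinely fails at negative half-integers under the stated convention, and holds verbatim only if one takes $\qd(z)=\lfloor z+1/2\rfloor$ for all $z$. What your argument establishes without exception is the identity $\lfloor x+1/2\rfloor=\left\lceil\frac{1}{2}\left\lfloor\lfloor 2\beta x\rfloor/\beta\right\rfloor\right\rceil$ for every real $x$, which is the substance the paper relies on (in the form $\qd(x_i(t))=\lceil n_i(t)/2\rceil$), the discrepancy being confined to the measure-zero set of negative half-integers.
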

Lemma~\ref{lem:FormulasFloor} implies that $\qd(x_{i}(t))=\left\lceil\frac{n_i(t)}{2}\right\rceil$,
and then we can manipulate the dynamics as follows:
\begin{align*}
x_i(t+1)&=  x_i(t)-\frac{1}{2} \qd(x_i(t))+\frac{1}{2} \qd(x_j(t))\\
\lfloor 2 x_i(t+1)\rfloor&=  \lfloor 2 x_i(t)\rfloor- \qd(x_i(t))+ \qd(x_j(t)),
\end{align*}
From this we obtain that
\begin{align*}
n_i(t+1)&= n_i(t)-\left\lceil\frac{n_i(t)}{2}\right\rceil+\left\lceil\frac{n_j(t)}{2}\right\rceil\\
&= \left\lfloor\frac{n_i(t)}{2}\right\rfloor+\left\lceil\frac{n_j(t)}{2}\right\rceil.
\end{align*}
We have thus found an iterative system involving only the symbolic
signals $n_i(t)$. When the edge $(i,j)$ is selected, $i$ and $j$ adjourn their states following the pair dynamics
\begin{equation}\label{evolution-automaton-p}
\left(n_i(t+1),n_j(t+1)\right)=\gii(n_i(t), n_{j}(t))\\
\end{equation}
where $\map{\gii}{\integers\times \integers}{\integers\times \integers}$ is
$$
\gii(h,k)=\left(\left\lfloor\frac{h}{2}\right\rfloor+\left\lceil\frac{k}{2}\right\rceil,
\left\lfloor\frac{k}{2}\right\rfloor+\left\lceil\frac{h}{2}\right\rceil\right).
$$
Notice that $\gii$ is symmetric in its arguments, in the sense that if $\gii(h,k)=(\eta,\chi)$, then $\gii(k,h)=(\chi,\eta)$. Since $n_i(t)=\lfloor 2x_i(t) \rfloor$, the analysis of the evolution of \eqref{evolution-automaton-p} allows us to obtain information about the asymptotics of $x_i(t)$.
We have  the following result.
\begin{theorem}\label{theo:ConvInM}
Let $n(t)$ evolve according to (\ref{evolution-automaton-p}), and let
\begin{equation}\label{eq:SetR6}
\RR=\left\{r\,\in\,\integers^N\,: \exists \, \alpha \,\in\,\integers
\mbox { s. t. }\,r-\alpha\1\,\in\,\left\{0, 1\right\}^N\right\}.
\end{equation}
Then, for every initial condition $n(0)\in \integers$, almost surely there exists $\Tcon\,\in\,\integernonnegative$ such that $n(t)\, \in\,\RR$ for all $t \geq \Tcon.$
\end{theorem}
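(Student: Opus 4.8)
The plan is to recast \eqref{evolution-automaton-p} as a Markov chain on $\integers^N$ and exhibit a Lyapunov function. First I would record two invariance facts about the pair map $\gii$. Since $\lfloor h/2\rfloor+\lceil h/2\rceil=h$, the sum $\sum_i n_i(t)$ is conserved; and a direct case check on the parities of $h,k$ shows that $\gii(h,k)$ returns the two integers nearest to $(h+k)/2$, so the two updated entries always differ by at most $1$ and lie between $h$ and $k$. Consequently $V(n):=\sum_i n_i^2$ is nonincreasing along the dynamics, and the selected edge $(i,j)$ produces a \emph{strict} decrease of $V$ if and only if $|n_i-n_j|\ge 2$ (for a pair of fixed sum, the sum of squares is strictly smaller once the two values are brought within distance $1$). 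Because $V$ is nonincreasing, every reachable state satisfies $\sum_i n_i^2\le V(n(0))$, so only finitely many states are reachable from $n(0)$; the symbolic dynamics is therefore a finite, time-homogeneous Markov chain.

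Next I would identify $\RR$ with the terminal behaviour of this chain. For a fixed value of the conserved sum, the configurations in $\RR$ are exactly the minimizers of $V$, and $\RR$ is invariant: if all entries lie in $\{\alpha,\alpha+1\}$ then any selected edge has endpoints differing by at most $1$, so the update keeps all entries in $\{\alpha,\alpha+1\}$. It then suffices to prove that every recurrent state of the chain lies in $\RR$. Indeed, on a recurrent communicating class $V$ must be constant (it cannot strictly decrease and later return), so every edge applied to a recurrent state preserves $V$, i.e.\ every recurrent state has all \emph{adjacent} entries differing by at most $1$. The remaining task is to rule out a recurrent state $n$ with $\max_i n_i-\min_i n_i\ge 2$, which I would do by showing that from such an $n$ some finite sequence of edge selections — hence, since each edge is chosen with positive probability at every step, a positive-probability event — reaches a configuration carrying an edge whose endpoints differ by at least $2$, from which one further selection strictly decreases $V$, contradicting that $V$ is constant on the class.

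The core combinatorial lemma, and the step I expect to be most delicate, is this construction of a $V$-decreasing sequence from a non-$\RR$ configuration all of whose adjacent entries differ by at most $1$. The key mechanism is that on an edge carrying the values $\{a,a+1\}$, the map $\gii$ acts as a transposition of the two endpoints exactly when $a$ is even, and as the identity when $a$ is odd; thus values can be transported across the graph only through these parity-selected ``swappable'' bands, and this parity bookkeeping is the real obstacle. To finish, let $M=\max_i n_i$ and choose, using connectivity, a shortest path $u_0,\dots,u_p$ from a node $u_0$ with $n_{u_0}=M$ to the nearest node with value $\le M-2$; since all adjacent differences are at most $1$, the intermediate nodes all carry value $M-1$ and $n_{u_p}=M-2$. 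If $M$ is odd, the band $\{M-1,M\}$ is swappable, so I can migrate the value $M$ along the path until it sits next to $u_p$, opening an edge with difference $\ge 2$. If $M$ is even, the band $\{M-2,M-1\}$ is swappable instead, so I can migrate the value $M-2$ toward $u_0$ until it sits next to $u_0$, again opening an edge with difference $\ge 2$. Either way the next selection strictly decreases $V$.

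Finally, since the chain has finitely many states, $\RR$ is closed, and no state outside $\RR$ is recurrent, the chain is absorbed into its recurrent states — all contained in $\RR$ — in finite time almost surely; taking $\Tcon$ to be the first hitting time of $\RR$ and using its invariance gives $n(t)\in\RR$ for all $t\ge\Tcon$ almost surely.
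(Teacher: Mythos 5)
Your proof is correct, and it packages the argument differently from the paper even though the combinatorial heart is the same. The paper establishes finiteness of the chain through the monotonicity of $m(t)=\min_i n_i(t)$ and $M(t)=\max_i n_i(t)$, and then shows directly that whenever $M(t)-m(t)\geq 2$ the range decreases with positive probability in finite time, by tracking the cardinality of the set of nodes achieving the minimum and iterating over a sequence of times $t_0,t_1,\dots$; its path construction propagates parity-selected swaps so as to bring a value $\geq m+2$ next to a minimizer, with the cases $m(t_0)=0$ and $m(t_0)=1$ dictating whether the path's edges are selected in forward or reverse order. You replace this bookkeeping with the quadratic function $V(n)=\sum_i n_i^2$: sum conservation plus the observation that $\gii$ maps a pair to the two integers nearest its midpoint give that $V$ is nonincreasing and strictly decreases exactly when the selected endpoints differ by at least $2$, and the standard fact that $V$ must be constant on a recurrent class of a finite chain then reduces everything to your swap-propagation lemma, which is the mirror image of the paper's (you work at the maximum with parity cases on $M$, the paper at the minimum with parity cases on $m$). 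Your framework buys a cleaner absorption argument, with no cardinality bookkeeping over a sequence of stopping times, and is essentially the Kashyap--Ba{\c s}ar--Srikant style of proof that the paper itself points to, in the first remark after Corollary~\ref{corol:SymbConv}, as an alternative route (your conserved sum, strict decrease for gaps of size at least two, and parity-dependent swap rule are exactly the properties (P1), (P2) and the parity-dependent swap axiom discussed there). What the paper's version yields as a by-product are the explicit monotone bounds $m(0)\leq n_i(t)\leq M(0)$, which your $\ell^2$-ball bound on reachable states does not give directly.

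One point of precision in your key lemma: for the claim that the intermediate nodes of the path all carry the value $M-1$, the path must be a shortest path \emph{between the set} of maximizers \emph{and the set} of nodes with value at most $M-2$ (this is presumably what your ``nearest'' is meant to ensure, and it is what the paper does). A path that is merely shortest from one fixed maximizer could pass through other nodes of value $M$, where the migration would stall, since a pair $(M,M)$ is left fixed by $\gii$; alternatively, restarting the path at the last $M$-valued node on it repairs the argument.
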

\begin{proof}
The proof is based 
on verifying the following three facts:
\begin{itemize}
\item [(i)] the set $\RR$, defined in~\eqref{eq:SetR6}, is
an invariant subset for the evolution described
by~\eqref{evolution-automaton-p}; \item [(ii)] $n(t)$ is a Markov
process on a finite number of states; \item [(iii)] starting from any state in $\Z^N$, there is a
positive probability for $n(t)$ to reach a state in $\RR$
in a finite number of steps.
\end{itemize}
Let us now check them in order.
\begin{itemize}
\item [(i)] Let $h\,\in\,\integers$. Observe that
$$
\gii\left(h, h+1\right)=\left\{
\begin{array}{l}
(h+1, h) \qquad \text{if $h$ is even}\\
(h, h+1) \qquad \text{if $h$ is odd.}
\end{array}
\right.
$$
This implies that $\RR$ is an invariant subset for the
dynamics described by~\eqref{evolution-automaton-p}.

\item [(ii)] Markovianity immediately follows from the fact that
subsequent random choices of the edges are independent. We prove
now that the states are finite. To this aim let $(h', k')=\gii(h, k).$
The form of $\gii$ implies that
$$
\max \left\{h', k'\right\}\leq \max \left\{h, k\right\} \qquad
\min \left\{h, k\right\}\leq \min\left\{h', k'\right\}.
$$
Define
\begin{equation}\label{eq:m-M}
m(t)=\min_{1\leq i \leq N}n_i(t) \qquad \text{and} \qquad
M(t)=\max_{1 \leq i \leq N}n_i(t).
\end{equation}
Then, the above inequalities imply that $m(t)\geq m(0)$ and $M(t)\leq M(0)$. This implies that the cardinality of the set of the states is upper bounded by $(M(0)-m(0)+1)^N.$

\item[(iii)] Let $D(t)=M(t)-m(t).$ The proof of (iii) is based on the following strong result about the monotonicity of $D(t)$: if $D(t)\geq 2$, then there exists $\tau\,\in\,\naturals$ such that
\begin{equation}\label{eq:ProD}
\Pr[D(t+\tau)<D(t)]>0.
\end{equation}
Now we prove~\eqref{eq:ProD}.
We define $\I_a(t)=\{i \in \V: n_i(t)=a\}$, and fix a time $t_0\in \naturals$. We shall prove that the cardinality of $\I_{m(t_0)}(t)$, denoted by $|\I_{m(t_0)}(t)|$, does not increase as a function of time, and that, if $D(t)\geq 2$, then there is a positive probability that it decreases within a finite number of
time steps. Notice first that, for $h,\,k\,\in\,\integers$,
$\gii(h+2,k+2)=\gii(h,k)+2.$ Hence, by an appropriate translation of the
initial condition, we can always restrict ourselves to the case
$m(t_0)\,\in\,\left\{0,1\right\},$ which is easier to handle.

{\it Case} $m(t_0)=0$. In this case it is possible for a nonzero
state to decrease to $0$, but only in the case of a swap between
$0$ and $1$. This assures that $|\I_{m(t_0)}(t)|$ is
nonincreasing. Let $\S(t)$ denote the set of nodes which
have value $m(t_0)+2$ or larger. Since $D(t_0)\geq 2$ then
$\S(t_0)$ is non empty. Now let
$(v_1,v_2,\ldots,v_{p-1},v_p)$ be a shortest path between
$\I_{m(t_0)}(t)$ and $\S(t_0)$. Such a  path exists since
$\G$ is connected. Note that $v_1\,\in\,\I_{m(t_0)}(t)$
and $v_p\,\in\,\S(t_0)$ and that
$\left\{v_2,\ldots,v_{p-1}\right\}$ could be an empty set; in this
case a shortest path between $\I_{m(t_0)}(t)$ and $\S(t_0)$
has length $1$. Note also that all the nodes in the path, except $v_1$ and $v_p$, have value $1$ at time $t_0$, otherwise
$(v_1,v_2,\ldots,v_{p-1},v_p)$ would not be a shortest path. Since
each edge of the communication graph has a positive probability of
being selected at any time, there is also a positive probability
that in the $p-1$ time units following $t_0$ the edges of this path
are selected sequentially, starting with the edge $(v_1,v_2)$. At
the last step of this sequence we have that the values of
$v_{p-1}$ and $v_p$ are updated. By observing again, that the pair of values $(0,1)$ is transformed by (\ref{evolution-automaton-p}) into the pair $(1,0)$, we have that the value of $v_{p-1}$, when
the edge $(v_{p-1},v_p)$ is selected, is equal to $0$. This update, for the form of $(\ref{evolution-automaton-p})$, makes the value of both nodes be strictly greater than $0$.
Therefore, this proves that
$|\I_{m(t_0)}(t_0+p-1)|<|\I_{m(t_0)}(t_0)|$ with positive probability.
Clearly, if $|\I_{m(t_0)}(t_0)|=1$ then we have also that
$D(t_0+p-1)<D(t_0)$ with
positive probability.

{\it Case} $m(t_0)=1$. In this case no state can decrease to $1$,
and thus $|\I_{m(t_0)}(t)|$ is again nonincreasing. Let
$\I_{m(t_0)}(t)$, $\S(t)$ and $(v_1,v_2,\ldots,
v_{p-1},v_p)$ be defined as in the previous case. In
this case all the nodes $v_2,\ldots,v_{p-1}$ in the path have
value equal to $2$. Moreover observe that also the sequence of
edges $(v_{p-1},v_p)$, $(v_{p-2},v_{p-1})$, \ldots,$(v_2,v_3)$,
$(v_1,v_2)$ has positive probability of being selected in the
$p-1$ time units following $t_0$. At the last step of this sequence
of edges, the values of $v_1$ and $v_2$ are updated. The value of $v_1$ is equal to $1$. Since the value of $v_p$ at time $t_0$ is greater or
equal to $3$, and since the pair $(2,3)$ is transformed by (\ref{evolution-automaton-p})
into $(3,2)$, we have that the value of $v_2$ when the edge $(v_1,v_2)$ is selected, is
greater or equal to $3$. This update, for $(\ref{evolution-automaton-p})$, causes the value
of both nodes to be strictly greater than $1$. Hence $|\I_{m(t_0)}(t_0+p-1)|<|\I_{m(t_0)}(t)|$
with positive probability. Again, if $|\I_{m(t_0)}(t)|=1$ then we have also that $D(t_0+p-1)<D(t)$
with positive probability.\\
Consider now the following sequence of times $t=t_0,t_1,t_2,\ldots$. For each $i\geq 0$,
let $t_{i+1}$ be the first time for which there is a
positive probability that $|\I_{m(t_i)}(t_{i+1})|< |\I_{m(t_i)}(t_{i})|.$ Let now $k\,\in\,\integernonnegative$
be such that $|\I_{m(t_k)}(t_{k})|=1.$ Then we have that $D(t_{k+1})<D(t_k)$ with positive probability. This ensures the validity of~\eqref{eq:ProD}.

The proof of the fact (iii) follows directly from~\eqref{eq:ProD}. Indeed, let $\bar n \, \notin\,\RR$,
then, from a repeated application of~\eqref{eq:ProD} it follows that, there exists a path of the Markov chain connecting $\bar n$ to a state $\bar n'=\left[\bar{n}'_1,\ldots,\bar{n}'_N\right]$, such that $\max\left\{\bar{n}'_1,\ldots,\bar{n}'_N\right\}-\min\left\{\bar{n}'_1,\ldots,\bar{n}'_N\right\}<2,$ that is, $\bar{n}'\,\in\,\RR$.
\end{itemize}
\end{proof}

We can go back to the original system, and prove the following result.
\begin{corollary}\label{corol:SymbConv}
Consider the algorithm \eqref{eq:GossipQuant2d}. Then, almost surely, there exists $\Tcon\in\integernonnegative$ such that for all $t\ge \Tcon$,
\begin{equation}\label{eq:conv-det-comp}
   {|x_i(t)-x_j(t)|}\leq 1\quad \forall\,i,j.
\end{equation}
and hence,
$$
\|x(t)-\xave(0)\1 \|_\infty\leq 1.
$$
\end{corollary}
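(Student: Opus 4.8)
The plan is to transfer the discrete convergence result of Theorem~\ref{theo:ConvInM} back to the real-valued states $x_i(t)$, using the relation $n_i(t)=\lfloor 2x_i(t)\rfloor$ that defines the symbolic dynamics. First I would observe that, by construction, the symbolic signals $n_i(t)$ evolve exactly according to the pair dynamics \eqref{evolution-automaton-p}, so Theorem~\ref{theo:ConvInM} applies verbatim: almost surely there exists a finite time $\Tcon$ such that $n(t)\in\RR$ for all $t\ge\Tcon$. The membership $n(t)\in\RR$ means precisely that there is an integer $\alpha$ with $n_i(t)-\alpha\in\{0,1\}$ for every $i$, so for $t\ge\Tcon$ all the values $n_i(t)$ differ by at most $1$, i.e. $|n_i(t)-n_j(t)|\le 1$ for all $i,j$.

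The core step is then to turn the bound on the integers $n_i$ into a bound on the reals $x_i$. From $n_i(t)=\lfloor 2x_i(t)\rfloor$ we have $n_i(t)\le 2x_i(t)<n_i(t)+1$, hence
\begin{equation*}
\frac{n_i(t)}{2}\le x_i(t)<\frac{n_i(t)+1}{2}.
\end{equation*}
Combining the lower bound for $x_i$ with the upper bound for $x_j$ gives, for $t\ge\Tcon$,
\begin{equation*}
x_i(t)-x_j(t)<\frac{n_i(t)+1}{2}-\frac{n_j(t)}{2}=\frac{n_i(t)-n_j(t)+1}{2}\le\frac{1+1}{2}=1,
\end{equation*}
using $n_i(t)-n_j(t)\le 1$. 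By symmetry in $i$ and $j$ the same argument bounds $x_j(t)-x_i(t)$, so $|x_i(t)-x_j(t)|\le 1$ for all $i,j$, which is exactly \eqref{eq:conv-det-comp}.

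Finally I would derive the $\ell^\infty$ estimate. Since the compensating rule \eqref{eq:GossipQuantVect2} preserves the average, we have $\xave(t)=\xave(0)$ for all $t$, and $\xave(0)$ is a convex combination (the arithmetic mean) of the $x_i(t)$. Therefore $\min_i x_i(t)\le\xave(0)\le\max_i x_i(t)$, and from $|x_i(t)-x_j(t)|\le 1$ for all pairs we get $\max_i x_i(t)-\min_i x_i(t)\le 1$, whence $|x_i(t)-\xave(0)|\le\max_i x_i(t)-\min_i x_i(t)\le 1$ for every $i$, giving $\|x(t)-\xave(0)\1\|_\infty\le 1$.

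I do not expect a genuine obstacle here, since all the probabilistic and combinatorial work is already contained in Theorem~\ref{theo:ConvInM}; the only point requiring mild care is the direction of the inequalities when passing from the floor-based integers $n_i$ to the reals $x_i$, making sure the half-open interval defining $\lfloor 2x_i\rfloor$ yields the strict-versus-nonstrict bounds correctly so that the final gap is exactly $1$ rather than something slightly larger.
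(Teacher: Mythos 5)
Your proof is correct and follows essentially the same route as the paper: it invokes Theorem~\ref{theo:ConvInM}, translates $n(t)\in\RR$ back to the reals via $n_i(t)=\lfloor 2x_i(t)\rfloor$ (the states lying in two consecutive half-bins), and uses the average-preservation property of the compensating rule~\eqref{eq:GossipQuantVect2} to obtain the $\ell^\infty$ bound. The paper compresses all of this into one sentence, so your write-up simply makes explicit the inequality bookkeeping and the role of $\xave(t)=\xave(0)$ that the paper leaves implicit.
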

\begin{proof}
The proof is an immediate consequence of Theorem~\ref{theo:ConvInM} and of the relation
$n_i(t)=\left\lfloor2x_i(t)\right\rfloor$, which assure that the
states belong to two consecutive quantization bins.
\end{proof}
Our results about algorithms for quantized communication (here and in following sections) can naturally be related to the analysis of gossip consensus algorithms with quantized {\em states}. Namely, the important connections with the work in \cite{AK-TB-RS:07} are the object of the next two remarks.
\begin{remark}\normalfont
It is worth noting that Theorem~\ref{theo:ConvInM} is a  variation
of Lemma~3 and Theorem~1 in \cite{AK-TB-RS:07}. In
\cite{AK-TB-RS:07} the authors introduced a class of {\it
quantized gossip algorithms}, satisfying the following
assumptions. Let $(i,j)$ be the edge selected at time $t$ and let
$n_i(t)$ and $n_j(t)$  the values at time $t$ of node $i$ and of
node $j$ respectively. If $n_i(t)=n_j(t)$ then $n_i(t+1)=n_i(t)$
and $n_j(t+1)=n_j(t)$. Otherwise, defined
$D_{ij}=|n_i(t)-n_j(t)|$,  the method used to update the values
has to satisfy the following three properties:
\begin{itemize}
\item [(P1)] $n_i(t+1)+n_j(t+1)=n_i(t)+n_j(t)$,
\item [(P2)] if $D_{ij}(t)>1$ then $D_{ij}(t+1)<D_{ij}(t)$, and
\item [(P3)] if $D_{ij}(t)=1$ and (without loss of generality) $n_i(t)<n_j(t)$, then $n_i(t+1)=n_j(t)$ and $n_j(t+1)=n_i(t)$.
Such update is called {\em swap}.
\end{itemize}
Now we substitute the property $(P3)$ either with the property
\begin{itemize}
\item [(P3')] if $D_{ij}(t)=1$ and (without loss of generality)
$n_i(t)<n_j(t)$, then, if $n_i(t)$ is odd, then $n_i(t+1)=n_j(t)$
and $n_j(t+1)=n_i(t)$, otherwise if $n_i(t)$ is even then
$n_i(t+1)=n_i(t)$ and $n_j(t+1)=n_j(t)$
\end{itemize}
or with the property
\begin{itemize}
\item [(P3'')] if $D_{ij}(t)=1$ and (without loss of generality)
$n_i(t)<n_j(t)$, then, if $n_i(t)$ is even then $n_i(t+1)=n_j(t)$
and $n_j(t+1)=n_i(t)$, otherwise if $n_i(t)$ is odd then
$n_i(t+1)=n_i(t)$ and $n_j(t+1)=n_j(t)$.
\end{itemize}

If we consider the class of algorithms satisfying (P1), (P2),
(P3') or satisfying (P1), (P2), (P3''), it is possible to prove
that Lemma~3 and Theorem~1 stated in \cite{AK-TB-RS:07} hold true
also for this class. The proofs are analogous to that of  Theorem~\ref{theo:ConvInM} provided above. Moreover it is easy to see that the algorithm (\ref{evolution-automaton-p}) satisfies the
properties (P1), (P2), (P3'). This represents an alternative way
to prove Theorem~\ref{theo:ConvInM}.
\end{remark}

\begin{remark}\normalfont
We observe that a sensible algorithm for consensus with {\em deterministically} quantized communication comes applying the algorithm of Kashyap, Ba{\c s}ar and Srikant \cite{AK-TB-RS:07}, which we refer to as the KBS algorithm. The adaptation we propose is as follows. The initial real values are first quantized, and then the KBS algorithm is applied: convergence to consensus is guaranteed up to an error of one, in finite time. Since the worst case error committed by quantizing the initial states is $1/2$, we conclude that for $t$ large enough $\|x(t)-\xave(0)\1\|_{\infty}\le \frac{3}{2}.$ Hence the convergence properties of this algorithm, which renounces making any use of the continuity of states, are worse, but comparable with those of the algorithm \eqref{eq:GossipQuant2d}.
%
\end{remark}


\subsection{Probabilistic quantizer}\label{sec:PartQA-qp}
In this section we assume that the information exchanged between
the systems is quantized by means of the probabilistic quantizer
$\qp$ described in \eqref{eq:qp}, namely
$\hat{x}_i(t)=\qp(x_i(t))$. Dealing with updates based on the probabilistic quantizer, we make the following natural assumption of statistical independence.
\begin{assumption}\label{ass:Assindip}
For all $t\in \integernonnegative$, given the values $x_i(t)$ for all $i\in \V$, the random variables $\qp(x_i(t))$, as $i$ varies, form an independent set. Moreover, for all $t\in \integernonnegative$, given the value $x_i(t)$ for any $i\in\V$, the random variable $\qp(x_i(t))$ is independent from $x_j(t)$, for every $j\neq i$.
\end{assumption}

The algorithm for the compensating strategy, when the edge
$(i,j)$ is chosen, can be written as
\begin{align}\label{eq:GossipQuant2p}
x_i(t+1)&= x_i(t)-\frac{1}{2} \qp(x_i(t))+\frac{1}{2} \qp(x_j(t))\nonumber\\
x_j(t+1)&= x_j(t)-\frac{1}{2} \qp(x_j(t))+\frac{1}{2} \qp(x_i(t)).
\end{align}

The analysis of (\ref{eq:GossipQuant2p}) is more complicated than
for the corresponding law (\ref{eq:GossipQuant2d}). This is mainly
due to the lack of convexity properties which were used in the
analysis of (\ref{eq:GossipQuant2d}). The following example shows
this type of difficulty.
\begin{example}\normalfont
Consider (\ref{eq:GossipQuant2d}) and assume that the edge $(i,j)$ has been selected at time $t$. Without loss of generality assume that $x_i(t)\leq x_j(t)$. Then, by convexity arguments, we have that $\lfloor x_i(t)\rfloor \leq x_i(t+1), x_j(t+1) \leq \lceil x_j(t)\rceil$. This is no longer true for (\ref{eq:GossipQuant2p}). As a numerical example assume that $x_i(t)=3.4$ and $x_j(t)=3.6$. Then with probability $0.16$ we have that $\qp(x_i(t))=4$ and $\qp(x_j(t))=3$. In such a case, by \eqref{eq:GossipQuant2p}, we have that $x_i(t+1)=2.9$ and $x_j(t+1)=4.1$. Hence, $x_i(t+1)$ and $x_j(t+1)$ do not belong to the interval $\left[\lfloor x_i(t)\rfloor, \lceil x_j(t)\rceil\right]$.
\end{example}
For this reason, we do not develop a symbolic analysis for this
algorithm, and we do not prove convergence in finite time. 
Instead of a symbolic analysis, we provide a mean-square analysis, yielding interesting convergence results. We start by observing that (\ref{eq:GossipQuant2p}) can be rewritten as
\begin{equation}\label{eq:stateeqP}
x(t+1)=P(t)x(t)+(P(t)-I)\left(\qp(x(t))-x(t)\right)
\end{equation}
Let
\begin{equation*}
y(t)=\left(I-\frac{1}{N}\1\1^*\right)x(t),
\end{equation*}
and remark that $y(t)=x(t)-\frac{1}{N}\1\1^*x(0).$

Now, from \eqref{eq:stateeqP}, we can write
\begin{align*}
\left(I-\frac{1}{N}\1\1^*\right)x(t+1)&=\left(I-\frac{1}{N}\1\1^*\right)P(t)x(t)+
\left(I-\frac{1}{N}\1\1^*\right)\,(P(t)-I)\,\left(\qp( x(t))-x(t)\right).
\end{align*}
Define the quantization error as
$$
e(t)=\qp(x(t))-x(t).
$$
Since
$\left(I-\frac{1}{N}\1\1^*\right)P(t)=P(t)\left(I-\frac{1}{N}\1\1^*\right)$
and $\left(I-\frac{1}{N}\1\1^*\right)\left(P(t)-I\right)=P(t)-I$, we obtain the following recursive relation
in terms of the variables $e(t)$ and $y(t)$:
\begin{equation}\label{eq:stateeqY}
y(t+1)=P(t)y(t)+(P(t)-I)e(t).
\end{equation}
In order to perform an asymptotic analysis of (\ref{eq:stateeqY}), it is convenient to introduce the
following matrices. Let
\begin{equation}\label{eq:def-Sigmas}
\Sigma_{yy}(t)=\Exp\left[y(t)y^*(t)\right], \quad
\Sigma_{ee}(t)=\Exp\left[e(t)e(t)^*\right], \quad
\Sigma_{ye}(t)=\Exp\left[y(t)e(t)^*\right],
\end{equation}
where the expectation is taken with respect to both the randomness due to gossip communication and to quantization.
Equation $(\ref{eq:stateeqY})$ leads to the following recursive equation in terms of the above matrices
\begin{align}\label{eq:SigmaYY}
\Sigma_{yy}(t+1)&=\Exp\left[P(t)\Sigma_{yy}(t)P(t)\right]+\Exp\left[P(t)\Sigma_{ye}(t)(P(t)-I)\right]\nonumber\\
&\qquad\qquad+\Exp\left[(P(t)-I)\Sigma_{ye}^*P(t)\right]+\left(P(t)-I\right)\Sigma_{ee}(t)\left(P(t)-I\right).
\end{align}

The following proposition states some correlation properties of
the variables $y$ and $e$.
\begin{proposition}\label{prop:Properties_y_w}
Consider the vector random variables $y(t)$ and $e(t)$ defined above, and the matrices in \eqref{eq:def-Sigmas}. Then
\begin{equation*}
\Exp\left[e(t)\right]=0 \qquad \text{and} \qquad \Sigma_{ee}(t)=\diag\left\{\sigma_1^2(t),\ldots, \sigma_N^2(t)\right\}
\end{equation*}
where $\sigma_i^2(t)=\Exp\left[e_i^2(t)\right]$ is such that $\sigma_i^2(t)\leq 1/4$, for all $i\in \until{N}$ and for all $t\geq 0$.\\
Moreover
\begin{equation}\label{Property2-5}
\Sigma_{ye}(t)=0,
\end{equation}
for all $t\geq 0$.
\end{proposition}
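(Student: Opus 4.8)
The plan is to condition on the current state $x(t)$ and exploit the unbiasedness and the conditional independence of the probabilistic quantizer. The key observation is that $x(t)$, and therefore $y(t)=(I-\frac1N\1\1^*)x(t)$, are determined by the initial condition together with the edge selections and quantizations performed strictly before time $t$, whereas the quantization error $e(t)=\qp(x(t))-x(t)$ carries the fresh randomness of the quantizer applied at time $t$. Consequently, given $x(t)$, the vector $y(t)$ is constant while $e(t)$ is still random, and every assertion of the proposition follows by computing the relevant conditional expectation given $x(t)$ and then applying the tower property $\Exp[\,\cdot\,]=\Exp\big[\Exp[\,\cdot\mid x(t)]\big]$.

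For the first two claims, fix $t$ and condition on $x(t)$: each $x_i(t)$ is then a deterministic real number, so Lemma~\ref{lem:PropQuantProb} gives $\Exp[e_i(t)\mid x(t)]=\Exp[\qp(x_i(t))\mid x(t)]-x_i(t)=0$ and $\Exp[e_i^2(t)\mid x(t)]=\Exp[(x_i(t)-\qp(x_i(t)))^2\mid x(t)]\le\frac14$. Taking expectations yields $\Exp[e(t)]=0$ and $\sigma_i^2(t)=\Exp[e_i^2(t)]\le\frac14$ for every $i\in\until{N}$. For the off-diagonal entries of $\Sigma_{ee}(t)$, Assumption~\ref{ass:Assindip} ensures that, given $x(t)$, the variables $\qp(x_i(t))$ are mutually independent; hence for $i\neq j$ the errors $e_i(t)$ and $e_j(t)$ are conditionally independent with zero conditional mean, so $\Exp[e_i(t)e_j(t)\mid x(t)]=\Exp[e_i(t)\mid x(t)]\,\Exp[e_j(t)\mid x(t)]=0$, and the tower property gives $\Exp[e_i(t)e_j(t)]=0$. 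Therefore $\Sigma_{ee}(t)=\diag\{\sigma_1^2(t),\ldots,\sigma_N^2(t)\}$ with each $\sigma_i^2(t)\le\frac14$.

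The cross-correlation is handled in the same manner. Since $y(t)$ is a deterministic function of $x(t)$, conditioning on $x(t)$ makes every entry $y_i(t)$ constant, so, using again $\Exp[e_j(t)\mid x(t)]=0$, we get $\Exp[y_i(t)e_j(t)\mid x(t)]=y_i(t)\,\Exp[e_j(t)\mid x(t)]=0$ for all $i,j$. The tower property then yields $\Exp[y(t)e(t)^*]=0$, that is, $\Sigma_{ye}(t)=0$ for every $t\ge0$.

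No induction on $t$ is required, as each identity is obtained directly by conditioning on $x(t)$. The only delicate point is the measurability bookkeeping, namely recognizing that $x(t)$ and $y(t)$ are determined by the information available before the quantizer is applied at time $t$ while $e(t)$ carries the new quantization randomness, together with the correct use of the conditional independence in Assumption~\ref{ass:Assindip} for the off-diagonal covariance. This is the heart of the argument, but it presents no real obstacle.
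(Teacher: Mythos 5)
Your proof is correct and follows essentially the same route as the paper's: condition on the state, apply the unbiasedness and variance bound of Lemma~\ref{lem:PropQuantProb}, invoke the conditional independence in Assumption~\ref{ass:Assindip} for the off-diagonal terms, and conclude by the tower property. The only (cosmetic) difference is that you condition on the full vector $x(t)$ rather than on $x_i(t)$ or the pair $(x_i(t),x_j(t))$ as the paper does, which has the small benefit of handling all entries of $\Sigma_{ye}(t)$, including the diagonal ones, in a single uniform computation.
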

\begin{proof} Using conditional expectation properties, and Lemma~\ref{lem:PropQuantProb}, we have that
\begin{align}\label{eq:EqualityChain1-5}
\Exp\left[e_i(t)\right]&=\Exp\left[\,\Exp\left[\qp(x_i(t))-x_i(t)|x_i(t)\right]\right]\nonumber\\
&=\Exp\left[\,\Exp\left[\qp(x_i(t))|x_i(t)\right]-x_i(t)\right]\nonumber\\
&=\Exp\left[x_i(t)-x_i(t)\right]\nonumber\\
&=0.
\end{align}
Moreover, for $i\neq j$, using Assumption~\ref{ass:Assindip},
\begin{align*}
\Exp\left[e_i(t)e_j(t)\right]&=\Exp\left[\Exp\left[e_i(t)e_j(t)\,|\,x_i(t),\,
x_j(t)\right]\right]\nonumber\\
&=\Exp\left[\Exp\left[e_i(t)\,|\,x_i(t),\,
x_j(t)\right]\Exp\left[e_j(t)\,|\,x_i(t),\,
x_j(t)\right]\right]\nonumber\\
&=\Exp\left[\Exp\left[e_i(t)\,|\,x_i(t)\right]\Exp\left[e_j(t)\,|\,
x_j(t)\right]\right]\nonumber\\
&=0
\end{align*}
%
If $i=j$, using again Lemma~\ref{lem:PropQuantProb}, we have that
\begin{align*}
\Exp\left[e_i^2(t)\right]&=\Exp\left[\left(\qp(x_i(t))-x_i(t)\right)^2\right]\nonumber\\
&=\Exp\left[\,\Exp \left[ \left(\qp(x_i(t))-x_i(t)\right)^2|x_i(t)\right] \right]\nonumber\\
&\leq\Exp\left[ \frac{1}{4} \right]\nonumber\\
&=\frac{1}{4}
\end{align*}
An argument similar to the one used to prove that
$\Exp\left[e_i(t)e_j(t)\right]=0$ allows to prove that
$\Exp\left[x_i(t)e_j(t)\right]=0$ for any $i\neq j$. This easily
yields (\ref{Property2-5}).
\end{proof}

From the above properties we have that (\ref{eq:SigmaYY}) can be rewritten as
\begin{equation}\label{eq:SigmaYYY}
\Sigma_{yy}(t+1)=\Exp\left[P(t)\Sigma_{yy}(t)P(t)\right]+\Exp\left[\left(P(t)-I\right)\Sigma_{ee}(t)\left(P(t)-I\right)\right].
\end{equation}
To estimate the asymptotic distance from the initial average, we introduce the cost function
\begin{equation}\label{eq:FuncCost}
J(W)=\limsup_{t\rightarrow \infty}\sqrt{\frac{1}{N}
\Exp[\|y(t)\|^2]}.
\end{equation}
The cost depends on the selection probabilities $W$, and, thanks to the above definitions, can be computed as
\begin{equation}\label{eq:cost}
J(W)=\limsup_{t\rightarrow \infty}\sqrt{\frac{1}{N}\trace\left\{\Sigma_{yy}(t)\right\}}.
\end{equation}

We can rewrite the evolution law (\ref{eq:SigmaYYY}) as
\begin{equation*}
\Sigma_{yy}(t+1)=\NomPart(\Sigma_{yy}(t))+\QuantPart(\Sigma_{ee}(t)),
\end{equation*}
where $\NomPart$ and $\QuantPart$ are linear operators from
$\reals^{N\times N}$ to itself. Namely, given a matrix $M$,
$\NomPart(M)=\Exp\left[P(t)M P(t)\right]$ and
$\QuantPart(M)=\Exp\left[\left(P(t)-I\right)M\left(P(t)-I\right)\right].$

It is useful to remark that $\NomPart$ is actually the evolution on $\Sigma_{yy}$ for the gossip algorithm
\cite{SB-AG-BP-DS:06}, in the absence of quantization error, while $\QuantPart$ can be regarded as a disturbance due to the quantization error. From \cite{FF-SZ:08}, we know that in the case of no quantization the system converges almost surely to consensus. This implies that $\NomPart$ is an asymptotically stable operator when restricted to the subspace ${\S}=\{M\in\reals^{N\times N}: \1^* M\1=0\}.$ Since $\1^* \QuantPart(M)\1=0$ for any matrix $M$ and $\Sigma_{yy}(0)\in \S$, we have that $\Sigma_{yy}(t)\in \S$ for all $t\ge 0$.

Computing $J(W)$ is a quite difficult problem. We then try to simplify the problem by introducing the following auxiliary system
\begin{equation}\label{eq:AuxiliarySystem}
\bar{\Sigma}(t+1)=\Exp\left[P(t)\bar{\Sigma}(t)P(t)\right]+\frac{1}{4}\Exp\left[\left(P(t)-I\right)^2\right],
\end{equation}
where $\bar{\Sigma}(0)=\Sigma_{yy}(0)$, and the following cost function
$$
\bar{J}= \limsup_{t\rightarrow \infty}\sqrt{\frac{1}{N}\trace\{\bar{\Sigma}(t)}\}.
$$
In principle, $\bar J$ should depend on $W$, too. However, we are
going to prove that this is not the case. We have the following
comparison result.
\begin{proposition}
Consider the cost functions $J(W)$ and $\bar J$. We have that
$$
J(W)\leq \bar{J}.
$$
\end{proposition}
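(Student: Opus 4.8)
The plan is to compare the two recursions for $\Sigma_{yy}(t)$ and $\bar{\Sigma}(t)$ term by term and show that the difference $\Delta(t) := \bar{\Sigma}(t) - \Sigma_{yy}(t)$ remains, in a suitable sense, nonnegative, so that the traces and hence the costs are ordered. First I would write both evolutions side by side:
\begin{align*}
\Sigma_{yy}(t+1) &= \NomPart(\Sigma_{yy}(t)) + \QuantPart(\Sigma_{ee}(t)),\\
\bar{\Sigma}(t+1) &= \NomPart(\bar{\Sigma}(t)) + \tfrac{1}{4}\,\Exp\left[(P(t)-I)^2\right],
\end{align*}
noting that the nominal operator $\NomPart$ is common to both and that $\bar{\Sigma}(0)=\Sigma_{yy}(0)$, so the two systems are driven by the same dynamics but by different forcing terms. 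The key observation is that the forcing term of the auxiliary system dominates the true one: since $\QuantPart(M)=\Exp[(P(t)-I)M(P(t)-I)]$ and, by Proposition~\ref{prop:Properties_y_w}, $\Sigma_{ee}(t)=\diag\{\sigma_1^2(t),\ldots,\sigma_N^2(t)\}$ with each $\sigma_i^2(t)\leq 1/4$, the matrix $\tfrac14 I - \Sigma_{ee}(t)$ is positive semidefinite. Applying $\QuantPart$ to this inequality and using that $\QuantPart$ maps positive semidefinite matrices to positive semidefinite matrices (because $(P(t)-I)M(P(t)-I)\succeq 0$ whenever $M\succeq 0$, and expectation preserves this), I would conclude that $\tfrac14\Exp[(P(t)-I)^2]-\QuantPart(\Sigma_{ee}(t)) = \QuantPart(\tfrac14 I - \Sigma_{ee}(t)) \succeq 0$.

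The next step is to propagate this forcing-term inequality through the recursion to obtain $\Delta(t)\succeq 0$ for all $t$, by induction. The base case is $\Delta(0)=0\succeq 0$. For the inductive step, I would write
\begin{equation*}
\Delta(t+1) = \NomPart(\Delta(t)) + \Big(\tfrac14\Exp[(P(t)-I)^2]-\QuantPart(\Sigma_{ee}(t))\Big),
\end{equation*}
where the second summand is positive semidefinite by the argument above, and the first summand $\NomPart(\Delta(t))=\Exp[P(t)\Delta(t)P(t)]\succeq 0$ because $\NomPart$ preserves positive semidefiniteness and $\Delta(t)\succeq 0$ by the inductive hypothesis. Hence $\Delta(t+1)\succeq 0$, completing the induction. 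Since a positive semidefinite matrix has nonnegative trace, this gives $\trace\{\Sigma_{yy}(t)\}\leq \trace\{\bar{\Sigma}(t)\}$ for every $t$, and taking square roots and $\limsup$ yields $J(W)\leq \bar{J}$ directly from the definitions \eqref{eq:cost} and the one for $\bar{J}$.

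The main obstacle, and the step deserving the most care, is the positivity-preservation claim for the operators $\NomPart$ and $\QuantPart$: one must verify that for any fixed realization of the selected edge the maps $M\mapsto P(t)MP(t)$ and $M\mapsto (P(t)-I)M(P(t)-I)$ send the positive semidefinite cone into itself (which is the standard congruence fact $A M A^*\succeq 0$ for $M\succeq 0$, with $A=P(t)$ symmetric), and then that taking the expectation over the edge selection, being a convex combination of such maps, also preserves positive semidefiniteness. I would spell this congruence argument out explicitly, since it is the structural fact that makes the whole comparison work; everything else is bookkeeping on the linear recursion. One minor point to confirm is that the $\limsup$ manipulation is legitimate, i.e.\ that the termwise trace inequality passes to the limit superior, which is immediate from monotonicity of $\limsup$ under the pointwise ordering of nonnegative sequences.
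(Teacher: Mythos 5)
Your proof is correct and follows essentially the same route as the paper: an induction on $t$ establishing the matrix inequality $\bar{\Sigma}(t)-\Sigma_{yy}(t)\geq 0$, using that $\Sigma_{ee}(t)\leq \frac{1}{4}I$ (from Proposition~\ref{prop:Properties_y_w}) and that conjugation by $P(t)$ and $P(t)-I$ followed by expectation preserves positive semidefiniteness. The only difference is that you spell out the congruence and trace/limsup bookkeeping that the paper leaves implicit, which is fine.
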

\begin{proof}
To prove the statement we show, by induction on $t$, that $\bar{\Sigma}(t)\geq \Sigma_{yy}(t)$ for all $t\geq 0$, where the inequality is meant in matricial sense, that is, $\bar{\Sigma}(t)-\Sigma_{yy}(t)$ is a semidefinite positive matrix.

Since $\bar{\Sigma}(0)= \Sigma_{yy}(0)$, the assertion is true for $t=0$. Assume now that  $\bar{\Sigma}(t)\geq \Sigma_{yy}(t)$ is true for a generic $t$. Then,
\begin{align*}
\bar{\Sigma}(t+1)&-\Sigma_{yy}(t+1)\\
&=\Exp\left[P(t)\bar{\Sigma}(t)P(t)\right]+\frac{1}{4}\Exp\left[\left(P(t)-I\right)^2\right]-\left(\Exp\left[P(t)\Sigma_{yy}(t)P(t)\right]+\Exp\left[\left(P(t)-I\right)\Sigma_{ee}(t)\left(P(t)-I\right)\right]\right)\\
&=\Exp\left[P(t)(\bar{\Sigma}(t)-\Sigma_{yy}(t))P(t)\right]+\Exp\left[\left(P(t)-I\right)\left(\frac{1}{4}I-\Sigma_{ee}(t)\right)\left(P(t)-I\right)\right].
\end{align*}
Since by inductive hypothesis $\bar{\Sigma}(t)\geq\Sigma_{yy}(t)$ and since by Proposition~\ref{prop:Properties_y_w} we know that
 $\Sigma_{ee}(t)\leq  \frac{1}{4}I$ for all $t\geq 0$, we have that $\bar{\Sigma}(t+1)-\Sigma_{yy}(t+1)\geq0$.
\end{proof}
Observe now that, since $P(t)^2=P(t)$, we obtain that $\Exp[(I-P(t))^2]=I-\Exp[P(t)]$.
From this fact we obtain the following result.
\begin{proposition}\label{prop:limBarSigma}
Given the above definitions and (\ref{eq:SigmaYYY}),
$$
\lim_{t\to \infty}\bar{\Sigma}(t)=\frac{1}{4}\left(I-\frac{1}{N}\1\1^*\right).
$$
\end{proposition}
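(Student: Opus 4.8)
The plan is to read the recursion \eqref{eq:AuxiliarySystem} as an affine iteration on the space of symmetric matrices, $\bar{\Sigma}(t+1)=\NomPart(\bar{\Sigma}(t))+B$, whose constant forcing term is
\begin{equation*}
B=\frac{1}{4}\Exp\left[(P(t)-I)^2\right]=\frac{1}{4}\left(I-\Exp[P(t)]\right),
\end{equation*}
the second equality being exactly the identity noted just before the statement (it uses $P(t)^2=P(t)$). For such an affine system the limit is governed by two ingredients: asymptotic stability of the linear part $\NomPart$ on the invariant subspace carrying the dynamics, and the identification of the necessarily unique fixed point. I would establish these in turn.

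First I would confine the dynamics to $\S=\{M\in\reals^{N\times N}:\1^*M\1=0\}$. The initial datum satisfies $\bar{\Sigma}(0)=\Sigma_{yy}(0)\in\S$; moreover $\NomPart$ maps $\S$ into itself, since $P(t)\1=\1$ and $\1^*P(t)=\1^*$ give $\1^*\NomPart(M)\1=\Exp[\1^*P(t)MP(t)\1]=\1^*M\1$; finally $\1^*B\1=\frac{1}{4}(N-N)=0$ because $\Exp[P(t)]$ is a convex combination of doubly stochastic matrices, hence doubly stochastic, so $B\in\S$. By induction $\bar{\Sigma}(t)\in\S$ for all $t$. Now I would invoke the asymptotic stability of $\NomPart$ restricted to $\S$, already recorded in the excerpt on the basis of \cite{FF-SZ:08}. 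This makes the spectral radius of $\NomPart|_{\S}$ strictly smaller than one, so $I-\NomPart$ is invertible on $\S$ and the affine iteration converges, for every initial condition in $\S$, to the unique fixed point $(I-\NomPart|_{\S})^{-1}(B)$.

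It then remains to verify that the candidate is this fixed point. Set $\Omega=I-\frac{1}{N}\1\1^*$ and note $\Omega\in\S$. The single explicit computation uses $P(t)\1=\1$, $\1^*P(t)=\1^*$ and $P(t)^2=P(t)$:
\begin{equation*}
P(t)\,\Omega\,P(t)=P(t)^2-\frac{1}{N}P(t)\1\1^*P(t)=P(t)-\frac{1}{N}\1\1^*.
\end{equation*}
Taking expectations yields $\NomPart\!\left(\frac{1}{4}\Omega\right)=\frac{1}{4}\left(\Exp[P(t)]-\frac{1}{N}\1\1^*\right)$, and adding $B=\frac{1}{4}(I-\Exp[P(t)])$ gives $\NomPart\!\left(\frac{1}{4}\Omega\right)+B=\frac{1}{4}\left(I-\frac{1}{N}\1\1^*\right)=\frac{1}{4}\Omega$. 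Thus $\frac{1}{4}\Omega$ solves the fixed-point equation in $\S$, and by uniqueness it is the limit, which is precisely the claimed value.

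I expect the conceptual core to be the confinement-to-$\S$-plus-stability step, which turns an a~priori unbounded matrix recursion into a contraction on a finite-dimensional space and thereby both guarantees convergence and forces uniqueness of the limit; the rest is the one algebraic identity $P(t)\Omega P(t)=P(t)-\frac{1}{N}\1\1^*$ together with routine bookkeeping on membership in $\S$.
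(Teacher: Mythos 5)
Your proof is correct and follows essentially the same route as the paper's: both use the asymptotic stability of $\NomPart$ restricted to $\S$ to conclude convergence of the affine iteration to a unique fixed point, and then verify by the same algebraic computation (using $P(t)\1=\1$, $\1^*P(t)=\1^*$, $P(t)^2=P(t)$) that $\frac{1}{4}\left(I-\frac{1}{N}\1\1^*\right)$ is that fixed point. The only difference is bookkeeping: you spell out the invariance of $\S$ under the iteration and the membership of the candidate in $\S$ (needed since $\NomPart$ has the eigenvector $\1\1^*$ with eigenvalue one, so uniqueness only holds within $\S$), details the paper leaves implicit in the surrounding text.
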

\begin{proof}
Define the matrix $\bar{\QuantPart}=\Exp[(I-P(t))^2]$. Since $\bar
\Sigma_{yy}(0)\in \S$, and $\NomPart$ is asymptotically stable if
restricted to the subspace $\S$, then $$\lim_{t\to
\infty}\bar \Sigma(t)=\sum_{t=0}^{+\infty} {\NomPart}^{(t)}(\bar \QuantPart).$$ This is the only fixed point of the iteration law
(\ref{eq:AuxiliarySystem}). Thus we are left to prove that
$\Sigma^*=\frac{1}{4}\left(I-\frac{1}{N}\1\1^*\right)$ is a fixed
point, that is $\Sigma^*={\NomPart}(\Sigma^*)+\bar {\QuantPart}$. This
is true, because
\begin{align*}
{\NomPart}(\Sigma^*)+\bar {\QuantPart}&=\frac{1}{4}\Exp\left[P(t)\left(I-\frac{1}{N}\1\1^*\right)P(t)\right]+
\frac{1}{4}(I-\Exp[P(t)])\\
&=\frac{1}{4}\left\{\Exp\left[P(t)^2\right]-\frac{1}{N}\1\1^*+I-\Exp[P(t)]\right\}\\
&=\frac{1}{4}\left\{I-\frac{1}{N}\1\1^*\right\}.
\end{align*}
\end{proof}


\begin{corollary}
For every weight matrix $W$, it holds true that $J(W)\leq \frac{1}{2}.$
\end{corollary}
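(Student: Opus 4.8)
The plan is to chain the two results that immediately precede this corollary. First I would invoke the comparison result stating $J(W)\leq \bar J$, which reduces the problem to bounding $\bar J$ by $1/2$ uniformly in $W$. This is the crucial reduction: whereas $J(W)$ depends on the selection probabilities in a way that is hard to control, $\bar J$ is built from the auxiliary system \eqref{eq:AuxiliarySystem}, whose limit was computed in closed form in Proposition~\ref{prop:limBarSigma}.

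Next, since Proposition~\ref{prop:limBarSigma} establishes that $\bar\Sigma(t)$ admits a genuine limit as $t\to\infty$, the $\limsup$ appearing in the definition of $\bar J$ coincides with the value obtained by substituting the limit matrix, by continuity of the trace and of the square root. Hence I would write
$$
\bar J=\sqrt{\frac{1}{N}\trace\left\{\lim_{t\to\infty}\bar\Sigma(t)\right\}}
=\sqrt{\frac{1}{N}\cdot\frac{1}{4}\,\trace\left\{I-\frac{1}{N}\1\1^*\right\}}.
$$

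Finally I would evaluate the trace explicitly: since $\trace(I)=N$ and $\trace(\1\1^*)=\1^*\1=N$, one gets $\trace\left\{I-\frac{1}{N}\1\1^*\right\}=N-1$, so that
$$
\bar J=\frac{1}{2}\sqrt{\frac{N-1}{N}}\leq\frac{1}{2}.
$$
Combining this with the comparison $J(W)\leq\bar J$ yields the claim for every weight matrix $W$.

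There is essentially no obstacle in this argument: the only point requiring care is the replacement of the $\limsup$ by evaluation at the limit, which is legitimate precisely because Proposition~\ref{prop:limBarSigma} gives convergence of $\bar\Sigma(t)$, not merely boundedness. It is also worth emphasizing that the resulting bound is uniform in $W$, because the limit matrix $\frac{1}{4}\left(I-\frac{1}{N}\1\1^*\right)$ furnished by Proposition~\ref{prop:limBarSigma} does not depend on $W$; this is exactly what makes the corollary hold for \emph{every} choice of selection probabilities.
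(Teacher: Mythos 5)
Your proposal is correct and follows exactly the paper's own argument: the paper likewise chains the comparison $J(W)\leq \bar J$ with Proposition~\ref{prop:limBarSigma} to obtain $\bar J=\frac{1}{2}\sqrt{\frac{N-1}{N}}\leq\frac{1}{2}$. Your write-up merely spells out the intermediate steps (passing from the $\limsup$ to the limit, and the trace computation $\trace\{I-\frac{1}{N}\1\1^*\}=N-1$) that the paper leaves implicit.
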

\begin{proof}
From Proposition~\ref{prop:limBarSigma} we can argue that $\bar J=\frac{1}{2}\sqrt{\frac{N-1}{N}}$, and since $J(W)\leq \bar J,$ the claim follows.
\end{proof}

From these theorems we draw a strong conclusion about the
convergence of the algorithm. In spite of missing consensus in the
strict sense, the asymptotical mean squared error of the algorithm
is smaller than the size of the quantization bin, and has a bound
which does not depend on the number of the agents, nor on the
topology of the graph, nor on the probability of the edges
selection.

\section{Totally quantized update}\label{sec:GlobQA}
This section is devoted to study the totally quantized update
\begin{align*}
x_i(t+1)&= \frac{1}{2} \hat x_i(t)+\frac{1}{2} \hat x_j(t)\\
x_j(t+1)&= \frac{1}{2} \hat x_j(t)+\frac{1}{2} \hat x_i(t),
\end{align*}
introduced in \eqref{eq:GossipQuant1}, with both the deterministic and probabilistic quantizers.
\subsection{Deterministic quantizer}
In this subsection we consider the totally quantized strategy with deterministic quantizer
\begin{align}\label{eq:GossipQuant1d}
x_i(t+1)&= \frac{1}{2} \qd(x_i(t))+\frac{1}{2} \qd(x_j(t))\nonumber\\
x_j(t+1)&= \frac{1}{2} \qd(x_j(t))+\frac{1}{2} \qd(x_i(t)).
\end{align}
We underline immediately that the update~\eqref{eq:GossipQuant1d} only uses quantized information, and not exact information combined with quantized information as the update~\eqref{eq:GossipQuant2d}. This makes the analysis of~\eqref{eq:GossipQuant1d} slightly easier than the analysis of~\eqref{eq:GossipQuant2d}.
We show in this subsection that the law \eqref{eq:GossipQuant1d} drives, almost surely, the systems to consensus at an integer value. However, the initial average of states is not preserved in general. Again, the analysis of this algorithm can be performed efficiently by means of a symbolic dynamics.
Let again $n_i(t)=\lfloor 2 x_i(t) \rfloor$ for all $i\in \V$. From
(\ref{eq:GossipQuant1d}) and the fact that
$\qd(x_{i}(t))=\left\lceil\frac{n_i(t)}{2}\right\rceil,$
we obtain
\begin{equation}\label{eq:DinGloQuaStra}
(n_i(t+1), n_j(t+1))=\left(\giii(n_i(t), n_j(t)), \giii(n_i(t),
n_j(t)) \right)
\end{equation}
where $\map{\giii}{\integers\times \integers}{\integers}$ is defined as
$$
\giii(h,k)=\left\lceil\frac{h}{2}\right
\rceil+\left\lceil\frac{k}{2}\right \rceil.
$$
Define \begin{equation}\label{eq:SetR6bis}
\A=\setdef{y\in
\integers^N}{\exists \,\alpha\, \in\, \integers \mbox{ such that } y=2\alpha\1}.
\end{equation}

We have the following result.
\begin{theorem}\label{th:naivedet} Let $n(t)$ evolve according to
(\ref{eq:DinGloQuaStra}). For every fixed initial condition
$n(0)$, almost surely there exists $\Tcon\,\in\,\integernonnegative$ such that
$n(t)\, \in\,\A$ for all $t \geq \Tcon.$
\end{theorem}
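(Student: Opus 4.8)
The plan is to follow the same three-part scheme used for Theorem~\ref{theo:ConvInM}: (i) show that $\A$, defined in \eqref{eq:SetR6bis}, is invariant under the dynamics \eqref{eq:DinGloQuaStra}; (ii) show that $n(t)$ is a Markov chain on a \emph{finite} state space; and (iii) show that from any state there is a positive probability of reaching $\A$ in finitely many steps. Since $\A$ consists of fixed points of \eqref{eq:DinGloQuaStra} (if all $n_i=2\alpha$, then $\giii(2\alpha,2\alpha)=2\lceil\alpha\rceil=2\alpha$), facts (ii) and (iii) will yield almost-sure absorption into $\A$ in finite time by the standard finite-Markov-chain argument used for Theorem~\ref{theo:ConvInM}. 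Invariance in (i) is exactly the fixed-point computation just noted, and Markovianity in (ii) is again immediate from the independence of the edge selections.

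The genuinely new point in (ii) is finiteness, because $\giii$ is \emph{additive} rather than averaging, so the maximum $M(t)=\max_i n_i(t)$ need not be nonincreasing. I would instead track $m(t)=\min_i n_i(t)$ together with the rounded-up maximum $\tilde M(t)=2\lceil M(t)/2\rceil$. From $\lceil h/2\rceil+\lceil k/2\rceil\ge 2\lceil\min\{h,k\}/2\rceil\ge\min\{h,k\}$ one gets that $m(t)$ is nondecreasing, and from $\lceil h/2\rceil+\lceil k/2\rceil\le 2\lceil\max\{h,k\}/2\rceil$ one gets $M(t+1)\le\tilde M(t)$, hence $\tilde M(t)$ is nonincreasing. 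Thus every $n_i(t)$ remains in the finite interval $[m(0),\,2\lceil M(0)/2\rceil]$, bounding the number of reachable states by $(2\lceil M(0)/2\rceil-m(0)+1)^N$.

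The core is (iii), which I split into two ingredients. First, drive the configuration to \emph{consensus}. Fix $t_0$, let $m=m(t_0)$, and set $\I_m(t)=\{\,i\in\V : n_i(t)=m\,\}$; the bounds above show $\I_m(t)$ is nonincreasing. Whenever $\I_m\neq\V$ one can shrink it with positive probability in a single step: if $m$ is odd, then $\giii(m,k)\ge m+1$ for every $k\ge m$, so activating any edge incident to $\I_m$ removes a node from it; if $m$ is even, connectedness of $\G$ provides an edge joining $\I_m$ to its nonempty complement, and activating it lifts the $m$-valued endpoint above $m$. Iterating empties $\I_m$ and strictly raises the minimum; since $m(t)$ is nondecreasing and bounded by $2\lceil M(0)/2\rceil$, this happens only finitely often, so with positive probability a consensus state $n_i\equiv w$ is reached in finitely many steps.

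Second, I must force the common value to be \emph{even}, which is the new difficulty relative to Theorem~\ref{theo:ConvInM}, whose target set $\RR$ ignored the exact value. If $w$ is even we are in $\A$. If $w$ is odd, activating any edge gives $\giii(w,w)=w+1$, creating two nodes at the even value $w+1$; since also $\giii(w+1,w)=w+1$ and $\giii(w+1,w+1)=w+1$, activating the edges of a spanning tree in turn propagates $w+1$ to the whole network without ever exceeding it, reaching $n_i\equiv w+1\in\A$. Concatenating the finitely many positive-probability single-edge events of the whole construction shows $\A$ is reached from any state with positive probability in a bounded number of steps, establishing (iii). I expect the main obstacle to be precisely this parity bookkeeping combined with the nonmonotonicity of the maximum: one must simultaneously keep the chain finite (via $\tilde M$) and guarantee that the limiting constant value is even rather than merely constant.
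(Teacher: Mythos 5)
Your proof is correct and follows essentially the same route as the paper: the identical three-fact scheme, the same invariants ($m(t)$ nondecreasing and $2\lceil M(t)/2\rceil = M(t)+r_{M(t)}$ nonincreasing, which is exactly the paper's bound \eqref{bound}), and the same connectedness-based positive-probability shrinking of $\I_{m}$. The only organizational difference is in fact (iii): the paper raises the minimum in one unified iteration whose case analysis ($m<M$, or $m=M$ odd) can terminate only inside $\A$, whereas you first reach consensus and then repair an odd consensus value $w$ by propagating $w+1$ along a spanning tree---both rest on the same mechanism, so this is a cosmetic reorganization rather than a different proof.
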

\begin{proof}
As for the proof of Theorem~\ref{theo:ConvInM}, it is sufficient to verify the following three facts:
\begin{enumerate}
\item [(i)] each element in the set $\A$ is invariant
 for the evolution described by~\eqref{eq:DinGloQuaStra};
 \item [(ii)] $n(t)$ is a Markov process on a finite number of
states; \item [(iii)] starting from any state in $\Z^N$, there is a positive probability for $n(t)$
to reach a state in $\A$ in a finite number of steps.
\end{enumerate}

Let us now check them in order.
\begin{enumerate}
\item [(i)] is trivial.
\item [(ii)] Markovianity immediately follows from the fact that subsequent random choices of the edges are independent. To prove that the states are finite, define $m(t)$ and $M(t)$ as in~\eqref{eq:m-M}. Let $p, q\,\in\,\integers$ with $p\leq q$. Then, the form of $\giii$ implies that $p\leq \giii(p, q)\leq q+r_q$ where $r_q$ denotes the remainder in the division of $q$ by $2$. It follows that\begin{equation}\label{bound} m(0)\le n_i(t)\leq M(0)+r_{M(0)}\;\forall i \in \V\;\;\forall t\ge 0\,.\end{equation} This yields (ii).
\item[(iii)]  Let us fix $t=t_0$, and assume that $n(t_0)\notin \A$. We prove that there exists $\tau\,\in\,\naturals$ such that $\Pr\left[n(t_0+\tau)\,\in\,\A\right]>0$. We start by observing that, from the assumption of having a connected graph, there exists $(h,k) \in \E$ such that $n_h(t_0)=m(t_0)$, $n_k(t_0)=q$ and $\giii(m(t_0),q)>m(t_0)$. Indeed, two cases are given when $n(t_0)\notin \A$.
    \begin{itemize}
        \item If $m(t_0)<M(t_0)$, then it suffices to consider an edge $(h,k)$ such that $n_h(t_0)=m(t_0)$ and $n_k(t_0)=q>m(t_0)$, which gives $\giii(m(t_0),q)>m(t_0)$. Note that such an edge exists from the hypothesis of having a connected graph;
        \item if $m(t_0)=M(t_0)$, necessarily we have that $m(t_0)$ and $M(t_0)$ are odd; then $\giii(m(t_0),m(t_0))>m(t_0)$.
    \end{itemize}
    We define now $\I_a(t)=\{i \in \V: n_i(t)=a\}$.  The above discussion implies that $|\I_{m(t_0)}(t_0+1)|<|\I_{m(t_0)}(t_0)|$
    with the positive probability of choosing the edge $(h,k)$ and hence that there is also a positive probability that at some finite time $t'>t_0$, $|\I_{m(t_0)}(t')|=0$, that is $m(t')>m(t_0).$ Iterating this argument and recalling that $M(t)\leq M(t_0)+r_{M(t_0)}$ for all $t\geq t_0$, it follows that there exists $\tau\,\in\,\naturals$ such that $\Pr\left[n(t_0+\tau)\,\in\,\A\right]>0$.
\end{enumerate}
This proves the thesis.
\end{proof}

We can now go back to the original system. The following corollary follows immediately from the definition of $n(t)$.
\begin{corollary}\label{cor:ConvPartiallyD}
Let $x(t)$ evolve according to (\ref{eq:GossipQuant1d}). Then
almost surely there exists $\Tcon\in \integernonnegative$ and $\alpha\, \in\, \integers$
such that $x_i(t)=\alpha$ for all $i\in \V$ and for all
$t\ge \Tcon$.
\end{corollary}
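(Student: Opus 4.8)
The plan is to deduce everything from Theorem~\ref{th:naivedet}, the identity $\qd(x_i(t))=\lceil n_i(t)/2\rceil$ already used to derive \eqref{eq:DinGloQuaStra}, and a standard ``every edge fires eventually'' recurrence argument. First I would invoke Theorem~\ref{th:naivedet}: almost surely there exist $\Tcon\in\integernonnegative$ and $\alpha\in\integers$ with $n(t)=2\alpha\1$ for all $t\ge\Tcon$. Since each point of $\A$ is invariant under \eqref{eq:DinGloQuaStra} (fact (i) of that proof), the integer $\alpha$ is fixed once and for all after $\Tcon$, so there is no ambiguity in the target value.

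Next I would translate this back to the real-valued states. From $n_i(t)=\lfloor 2x_i(t)\rfloor=2\alpha$ one gets $\qd(x_i(t))=\lceil n_i(t)/2\rceil=\alpha$ for every $i\in\V$ and every $t\ge\Tcon$. I would stress that this only confines $x_i(t)$ to the half-open interval $[\alpha,\alpha+1/2)$, so the continuous states need not yet equal $\alpha$. The crucial observation is that the update \eqref{eq:GossipQuant1d} snaps any participating agent to exactly $\alpha$: if edge $(i,j)$ is selected at a time $t\ge\Tcon$, then $x_i(t+1)=x_j(t+1)=\frac{1}{2}\alpha+\frac{1}{2}\alpha=\alpha$. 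Consequently, once an agent has taken the value $\alpha$ it keeps it forever (an update leaves it at $\alpha$, and inaction leaves it unchanged), so for $t\ge\Tcon$ the set of agents whose state equals $\alpha$ is nondecreasing in time.

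Finally I would close with a recurrence argument: since every edge is selected with strictly positive probability at each step, independently across time, by a Borel--Cantelli argument each edge, and hence each node, is almost surely involved in an update infinitely often. In particular, almost surely every node participates in at least one update after the (a.s. finite) time $\Tcon$. Therefore almost surely there is a finite time $\Tcon'\ge\Tcon$ after which $x_i(t)=\alpha$ for all $i\in\V$ and all $t\ge\Tcon'$, which is exactly the claim (after renaming $\Tcon'$ as $\Tcon$).

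The only point requiring genuine care is the gap between symbolic and continuous convergence: membership $n(t)\in\A$ forces merely that the quantized values coincide, not that the states themselves are integers and equal. Upgrading this to exact equality $x_i(t)=\alpha$ relies on combining the snapping property of \eqref{eq:GossipQuant1d} with almost-sure recurrence of edge selections; everything else is a direct rewriting of Theorem~\ref{th:naivedet} through the definition of $n(t)$.
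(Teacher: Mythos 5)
Your proposal is correct, and it follows the same route as the paper: deduce the claim from Theorem~\ref{th:naivedet} via the definition $n_i(t)=\lfloor 2x_i(t)\rfloor$. The difference is one of care rather than of strategy. The paper disposes of the corollary with a single sentence (it ``follows immediately from the definition of $n(t)$''), whereas you correctly point out that $n(t)=2\alpha\1$ by itself only confines each $x_i(t)$ to $[\alpha,\alpha+1/2)$, and you close this gap with the snapping property of \eqref{eq:GossipQuant1d} (any agent updated at a time $t\ge\Tcon$ lands exactly on $\alpha$, since both quantized values equal $\lceil 2\alpha/2\rceil=\alpha$, and then stays there) combined with almost-sure recurrence of edge selections. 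This is precisely the kind of argument the paper makes explicit only in the probabilistic-quantizer case, where the stopping time $\Tall$ is introduced and combined with Theorem~\ref{th:naivedet2} to obtain Corollary~\ref{corol:tot-det}; an equivalent shortcut for your final step is to observe that from an agent's first update onward one has $x_i(t)=n_i(t)/2$ identically (both are constant between updates and coincide right after each update), so that $t\ge\max\{\Tcon,\Tall\}$ already gives $x_i(t)=\alpha$ for all $i$. In short, your proof is valid and in fact supplies a bridging step that the paper's own one-line proof leaves implicit.
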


We have already underlined the fact that this strategy does not preserve the initial average, in general. However, the convexity argument developed in the above proof, step (ii), implies a worst case bound on the error committed by the algorithm, as
\begin{align}\label{eq:BoundOnDeviation}
\nonumber|\alpha-\xave(0)|\le & \frac{1}{2}\left|2\alpha-\frac{1}{N}\1^*n(0)\right|+\frac{1}{2}\left|\frac{1}{N}\1^*n(0)-2\xave(0)\right|\\
\nonumber\le &\frac{1}{2} \big(M(0)-m(0)+1\big)+\frac{1}{2}\\
\le & \max_{i}{x_i(0)}-\min_{i}{x_i(0)}+\frac{3}{2}.
\end{align}
The significance of this apparently conservative bound will be discussed in Remark~\ref{rem:deviation}.


\subsection{Probabilistic quantizer}
The algorithm for the totally quantized strategy, when the edge
$(i,j)$ is chosen, can be written as
\begin{align}\label{eq:GossipQuant1p}
x_i(t+1)&= \frac{1}{2} \qp(x_i(t))+\frac{1}{2} \qp(x_j(t))\nonumber\\
x_j(t+1)&= \frac{1}{2} \qp(x_j(t))+\frac{1}{2} \qp(x_i(t)).
\end{align}
A first remark is that $\Exp[\xave(t+1)=\xave(t)],$ that is, the average is preserved in expectation.
Below we prove that the law (\ref{eq:GossipQuant1p}), as the law (\ref{eq:GossipQuant1d}), drives almost surely the systems to consensus at an integer value. Using a probabilistic quantizer in a gossip algorithm, we have to deal with two sorts of randomness, since the interacting pair is randomly selected,
and the quantization map is itself random. This makes the analysis
of (\ref{eq:GossipQuant1p}) more complicated than the analysis of
(\ref{eq:GossipQuant1d}). However, again, we are able to prove the
convergence by a {\it symbolic dynamics} approach.

Let again $n_i(t)=\lfloor 2 x_i(t) \rfloor$ for all $i\in \V$, and let $n(t)=\left[n_1(t),\ldots,n_N(t)\right]^*$. Before finding a recursive equation for $n(t)$, we need to introduce the following random variable. Let
$$\Tall=\inf \setdef{t}{{\rm at\,\, time\,\, t\,\, every\,\,
 node\,\, in\,\,}\V {\rm\,\, has \,\,been\,\, selected\,\,
 at\,\, least\,\,once}}.$$
$\Tall$ is an integer random variable which is almost surely
finite, because nodes are selected with positive probability. Note
that, from~\eqref{eq:GossipQuant1p}, we have that $x_i(t)\,\in\,\left\{a,
a+1/2\right\}$ for some integer number $a$, for all $t\geq
\Tall$. This allows us to disregard the evolution before
$\Tall$ and to analyze, for $t\ge\Tall$, the symbolic dynamics
as follows. For $t\ge\Tall$, by recalling how the
probabilistic quantizer works, we have that
$$
\qp(x_i(t))=\left\{
\begin{array}{cc}
\frac{n_i(t)}{2} & \mbox{ if $n_i(t)$ is even} \\
\begin{array}{cc}
\lceil\frac{n_i(t)}{2}\rceil & \mbox{ with probability 1/2} \\
\lfloor\frac{n_i(t)}{2}\rfloor & \mbox{ with probability 1/2}
\end{array}
&\mbox{ if $n_i(t)$ is odd.}
\end{array}
\right.$$
Let $\xi_1$ and $\xi_2$ be two independent Bernoulli
random variables with parameter $1/2$ and define
$\map{\giv}{\integers\times\integers}{\integers}$ by
$$
\giv(h,k)=\left\lceil\frac{h}{2}\right
\rceil+\left\lceil\frac{k}{2}\right \rceil-\xi_1 r_h-\xi_2 r_k,
$$
where $r_h$ denotes the remainder of the division of $h$ by $2$.
If, at time $t$, the edge $(i,j)$ is selected, then
\begin{equation}\label{evolution-automaton-ppp}
(n_i(t+1), n_j(t+1))=\left(\giv(n_i(t), n_{j}(t)), \giv(n_i(t),
n_{j}(t))\right).
\end{equation}

The following result characterizes the convergence properties of~\eqref{evolution-automaton-ppp}. Recall that in \eqref{eq:SetR6bis} we defined the set $\A=\setdef{y\in
\integers^N}{\exists \,\alpha\, \in\, \integers \mbox{ such that } y=2\alpha\1}$.

\begin{theorem}\label{th:naivedet2}
Let $n(t)$ evolve according to (\ref{evolution-automaton-ppp}). For every fixed initial condition
$n(0)$, almost surely there exists $\Tcon\,\in\,\integernonnegative$ such that
$n(t)\, \in\,\A$ for all $t \geq \Tcon.$
\end{theorem}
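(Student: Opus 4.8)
The plan is to reproduce the three-step scheme of the proof of Theorem~\ref{th:naivedet}, inserting the extra randomness carried by $\xi_1,\xi_2$ wherever a deterministic update was previously invoked. The one structural fact I would use repeatedly is the deterministic sandwich
$$\left\lfloor\frac{h}{2}\right\rfloor+\left\lfloor\frac{k}{2}\right\rfloor\le \giv(h,k)\le \giii(h,k)=\left\lceil\frac{h}{2}\right\rceil+\left\lceil\frac{k}{2}\right\rceil,$$
valid for \emph{every} realization of $\xi_1,\xi_2$, so that $\giv$ is squeezed between the two floor/ceiling envelopes and $\giii$ is exactly its upper envelope (attained when $\xi_1=\xi_2=0$). \emph{Invariance of $\A$} is then immediate: if $n(t)=2\alpha\1$ every entry is even, the remainders vanish, and $\giv(2\alpha,2\alpha)=2\alpha$ holds deterministically, so every point of $\A$ is a fixed point of~\eqref{evolution-automaton-ppp}.

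For \emph{Markovianity and finiteness}, Markovianity follows as before from the independence of the successive edge selections and of the Bernoulli draws. For the finite state space I would prove a uniform, deterministic a~priori bound analogous to~\eqref{bound}. Let $m(0),M(0)$ be the extreme entries of $n(0)$, and set $\widetilde m=m(0)-r_{m(0)}$ and $\widetilde M=M(0)+r_{M(0)}$, both even. Using the sandwich, for any $p,q\in[\widetilde m,\widetilde M]$ one has $\giv(p,q)\le 2\lceil \widetilde M/2\rceil=\widetilde M$ and $\giv(p,q)\ge 2\lfloor \widetilde m/2\rfloor=\widetilde m$; hence the box $[\widetilde m,\widetilde M]^N$ is invariant and contains $n(0)$. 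This parity-stabilisation argument confines $n(t)$ to at most $(\widetilde M-\widetilde m+1)^N$ states for all $t$.

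The step I expect to be the \emph{main obstacle} is showing that from any $n(t_0)\notin\A$ the chain enters $\A$ in finite time with positive probability, because $\A$ requires not only $m(t)=M(t)$ but also that the common value be \emph{even}. Write $m=m(t_0)$. While $m<M(t_0)$, connectedness of $\G$ furnishes an edge $(h,k)$ with $n_h=m$ and $n_k=q>m$; selecting it and forcing the favourable outcome $\xi_1=\xi_2=0$ (an event of probability at least $1/4$) gives the common value $\giv(m,q)=\giii(m,q)>m$, exactly as in the deterministic step~(iii), so $|\I_{m}(t_0+1)|<|\I_{m}(t_0)|$. After at most $N$ such moves $\I_m$ is emptied and the minimum strictly increases; since the minimum is bounded above by $\widetilde M$, this can happen only finitely often, and one reaches an all-equal configuration $v\1$. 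If $v$ is even we are already in $\A$. If $v$ is odd, I would first select any edge with $\xi_1=\xi_2=0$ to raise two entries to $v+1$, and then repeatedly push the remaining $v$-entries up through boundary edges $(h,k)$ with $n_h=v,\ n_k=v+1$, choosing again $\xi_1=0$ so that $\giv(v,v+1)=v+1>v$, until every entry equals the even value $v+1$; at that point $n\in\A$. Each move has positive probability and the whole sequence is finite.

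With (i)--(iii) in hand, the conclusion follows from the same absorption argument for finite Markov chains used in Theorem~\ref{th:naivedet}: the states of $\A$ are absorbing by invariance, and by the reachability step the chain enters $\A$ within a uniformly bounded number of steps with probability bounded below, so the probability of avoiding $\A$ decays geometrically and $\A$ is reached almost surely in finite time. Invariance then keeps $n(t)\in\A$ for all larger $t$, yielding the claimed $\Tcon$.
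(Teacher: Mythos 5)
Your proposal is correct and follows essentially the same route as the paper: the same three-fact Markov-chain scheme (invariance of $\A$, finiteness of the state space via a box bound, reachability), with the identical key observation that forcing $\xi_1=\xi_2=0$ couples $\giv$ to the deterministic map $\giii$ with probability at least $1/4$ per step. The only difference is presentational: where the paper simply cites the path constructed in step (iii) of Theorem~\ref{th:naivedet}, you inline that construction (including the odd all-equal case), and your sandwich bound makes the finiteness step (ii) slightly more explicit than the paper's.
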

\begin{proof}
The proof is similar to the proof of Theorem~\ref{th:naivedet} and Theorem~\ref{theo:ConvInM}, and it is based on proving the following three facts:
\begin{enumerate}
\item [(i)] each element in the set $\A$ is invariant
 for the evolution described by~\eqref{evolution-automaton-ppp};
 \item [(ii)] $n(t)$ is a Markov process on a finite number of
states; \item [(iii)] starting from any state in $\Z^N$, there is a positive probability for $n(t)$
to reach a state in $\A$ in a finite number of steps.
\end{enumerate}

Let us now check them in order.
\begin{enumerate}

\item [(i)] is trivial.

\item [(ii)] Markovianity immediately follows from the fact that
subsequent random choices of the edges are independent and
from~\eqref{evolution-automaton-ppp}. To prove that the states are
finite, define $m(t)$ and $M(t)$ as in~\eqref{eq:m-M}. Let $h\,\in\,\integers$. Then, from the structure of
$\giv$ we have that
  \begin{itemize}
  \item $\giv(h,h)=h$ if $h$ is even;
    \item $h-1\leq \giv(h,h)\leq h+1$ if $h$ is odd.
  \end{itemize}
The above two properties imply that $m(0)-r_{m(0)}\le n_i(t)\leq
M(0)+r_{M(0)}$ for all $i \in \V$ and for all $t\ge 0$. This yields
(ii).
\item [(iii)] Observe that
  $$
  \giv(h,k)=\giii(h,k)-\xi_1 r_h-\xi_2 r_k,
  $$
  where $\giii$ is the map defining the evolution of~\eqref{eq:DinGloQuaStra}.
  Hence
  $$
\Pr\left[\giv(h,k)=\giii(h,k)\right]\geq \frac{1}{4}.
$$
This fact, combined with the fact (iii) proved along the proof of
Theorem~\ref{th:naivedet}, ensures that, also for~\eqref{evolution-automaton-ppp}, there is a positive probability of reaching a
state in $\A$ in a finite time.
\end{enumerate}
\end{proof}
The above theorem and the previous remarks about $\Tall$ lead to the following claim about the original system.
\begin{corollary}\label{corol:tot-det}
Let $x(t)$ evolve following (\ref{eq:GossipQuant1p}). Then almost
surely there exists $\Tcon\in \integernonnegative$ and $\alpha\in \integers$ such that
$x_i(t)=\alpha$, for all $i\in \V$ and for all $t\ge \Tcon$.
\end{corollary}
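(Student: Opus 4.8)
The plan is to transfer the finite-time convergence result of Theorem~\ref{th:naivedet2} from the symbolic variables $n(t)$ back to the original states $x(t)$, exploiting the relation $n_i(t)=\lfloor 2 x_i(t)\rfloor$ together with the structural observation, already recorded in the text preceding the theorem, that for $t\ge\Tall$ each state satisfies $x_i(t)\in\{a,a+1/2\}$ for some integer $a$. This latter fact is what makes the bridge between $n(t)\in\A$ and a genuine consensus of the $x_i$ at an integer value possible, so the argument is essentially a matter of unwinding definitions once the almost-sure finiteness of $\Tall$ is invoked.

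Concretely, I would argue as follows. First, invoke that $\Tall$ is almost surely finite (as noted in the discussion before the theorem), and restrict attention to times $t\ge\Tall$, so that every state lies in the discrete grid $\frac{1}{2}\integers$; in particular $x_i(t)=\frac{1}{2}n_i(t)$ for all $i$ and all $t\ge\Tall$. Second, apply Theorem~\ref{th:naivedet2} to conclude that, almost surely, there is a (random) time $\Tcon\ge\Tall$ and an integer $\alpha$ with $n(t)\in\A$ for all $t\ge\Tcon$, i.e.\ $n_i(t)=2\alpha$ for every $i\in\V$ and every $t\ge\Tcon$. Third, combine these two facts: for $t\ge\Tcon$ we have $x_i(t)=\frac{1}{2}n_i(t)=\frac{1}{2}(2\alpha)=\alpha$ for all $i\in\V$, which is exactly the claimed consensus at the integer value $\alpha$. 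The invariance part (i) of Theorem~\ref{th:naivedet2} guarantees that once $\A$ is reached the system stays there, so the statement holds for all $t\ge\Tcon$ and not merely at a single instant.

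The one point requiring a little care, and the step I expect to be the main (mild) obstacle, is the transition from the half-integer grid to the conclusion $x_i(t)=\alpha$: a priori $n_i(t)=\lfloor 2x_i(t)\rfloor=2\alpha$ only forces $x_i(t)\in[\alpha,\alpha+\tfrac12[$, so one must use the sharper structural fact $x_i(t)\in\{a,a+1/2\}$ valid for $t\ge\Tall$ to pin the value down exactly. Since $n_i(t)=2\alpha$ is even, the half-integer alternative $a+1/2$ (which would give an odd $n_i$) is excluded, leaving $x_i(t)=\alpha$. This is precisely why the proof must take $\Tcon\ge\Tall$ rather than working with an arbitrary convergence time, and it is the only place where the probabilistic-quantizer dynamics \eqref{eq:GossipQuant1p} (as opposed to the abstract recursion \eqref{evolution-automaton-ppp}) enters the argument. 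Everything else is an immediate consequence of the definition of $n(t)$, so the corollary indeed follows, as the statement asserts, directly from Theorem~\ref{th:naivedet2}.
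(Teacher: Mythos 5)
Your proposal is correct and follows exactly the route the paper intends: the paper states the corollary as an immediate consequence of Theorem~\ref{th:naivedet2} together with the remarks about $\Tall$, and your argument is precisely the fleshed-out version of that, including the one genuinely necessary observation that for $t\ge\Tall$ the states lie on the grid $\tfrac{1}{2}\integers$ so that $n_i(t)=2\alpha$ (even) forces $x_i(t)=\alpha$ rather than merely $x_i(t)\in[\alpha,\alpha+\tfrac{1}{2}[$. No gaps; the handling of $\Tcon\ge\Tall$ and the appeal to the invariance of each element of $\A$ to keep $\alpha$ constant are exactly what the paper's one-line justification leaves implicit.
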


As for (\ref{eq:GossipQuant1d}), this strategy does not preserve the initial average, in general. However, using the probabilistic quantizer guarantees that the average is preserved at least in expectation, and moreover, the convexity argument developed in the above proof, step (ii), provides a worst case bound on the error committed by the algorithm, as
\begin{equation}\label{eq:BoundOnDeviation2}
|\alpha-\xave(0)|\le \max_{i}{x_i(0)}-\min_{i}{x_i(0)}+2.\end{equation}

\begin{remark}[Deviation from the initial average]\label{rem:deviation}\normalfont
Let us discuss the issue of the deviation from the initial average in the totally quantized algorithm. The worst case bounds~\eqref{eq:BoundOnDeviation} and~\eqref{eq:BoundOnDeviation2} leave the possibility that the amplitude of such deviation depend on the initial condition, but one could expect that the average behavior of the algorithm be better than that. In Figure~\ref{fig:Comparison} we compare the totally quantized strategy, using $\qp$ and $\qd$, in terms of the deviation $z=|\alpha-N^{-1}\1^*x(0)|$. We plot $z$ as a function of $N$, using a sequence of complete graphs of increasing size. For our plot we considered complete graphs, because of  their faster convergence: however, the results are qualitatively the same for other families of graphs (rings, lattices, random geometric graphs). Simulations show that the consensus point obtained using $\qp$ is close to the average of the initial condition, even if the algorithm preserves the average of the states only in expectation. Instead, $\qd$ leads to a consensus point whose distance from the average of the initial condition is larger, increases with $N$, and depends on the initial condition. This behavior can be understood as a consequence of the accumulation of rounding errors. Indeed, the application of the symbolic dynamics $\giii$ increases the sum of the symbolic states by one, {\em every time an odd argument is involved}. Then, the larger the graph, the more time steps are needed for convergence, the more errors accumulate. The randomness due to $\qp$, instead, rules out this effect\footnote{A similar compensation of the accumulation of errors can also be obtained by a deterministic modification of the quantizer $\qd$. For instance, let $\qdtilde$ be defined as $\qd$ except that, for any integer $y$, $\qdtilde(y +0.5) = \begin{cases}y &\text{ if $y$ is odd}\\ y+1 &\text{ if $y$ is even.} \end{cases}$\\ Simulations suggest that such a correction actually reduces the perturbation of the average, but is less effective than randomization.}, and indeed \eqref{eq:GossipQuant1p} preserves the average in expectation at each time step.
\end{remark}

\begin{figure}[ht]
\begin{center}
\includegraphics[width=.52\columnwidth]{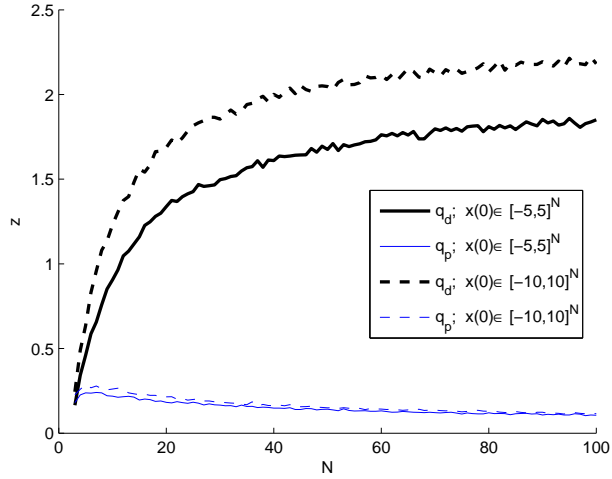}
  \caption{Comparison in terms of $z$ between the totally quantized algorithm using deterministic and probabilistic quantizers, for complete graphs of size $N$. The plotted values are the average of 1000 runs. The initial condition were drawn from a uniform distribution on the given intervals. The plotted lines are not smooth, due to significant variances. For $N=100$, in the four cases of the figure the standard deviation is equal to 0.41, 0.081, 0.50, 0.085, respectively.}   \label{fig:Comparison}
  \end{center}
\end{figure}

\section{Partially quantized update}\label{sec:Naive}
In this section we study the partially quantized update rule defined in~\eqref{eq:GossipQuant3} as
\begin{align*}
x_i(t+1)&= \frac{1}{2} x_i(t)+\frac{1}{2} \hat{x}_j(t)\nonumber\\
x_j(t+1)&= \frac{1}{2} x_j(t)+\frac{1}{2} \hat{x}_i(t).
\end{align*}

\subsection{Deterministic quantizer}
Observe that the update~\eqref{eq:GossipQuant3} does not preserve the average of states. Nevertheless,
if the deterministic quantizer is used, such an update induces the same symbolic dynamics $\gii$ as the compensating strategy.
Indeed, the continuous dynamics is
$$x_i(t+1)=\frac{1}{2}x_i(t)+\frac{1}{2} \qd(x_j(t)),$$
and we deduce that \begin{align*}
\lfloor 2 x_i(t+1)\rfloor=&  \lfloor  x_i(t)\rfloor+ \qd(x_j(t))\\
n_i(t+1)=& \left\lfloor\frac{n_i(t)}{2}\right\rfloor+\left\lceil\frac{n_j(t)}{2}\right\rceil,
\end{align*}
provided $n_i(t)=\lfloor 2 x_i(t)\rfloor$ as above in this paper.
We argue a result which is similar to Corollary~\ref{corol:SymbConv}, but allows a larger deviation from the initial average.
\begin{theorem}\label{thm:ConvNaiveD}
Consider the algorithm \eqref{eq:GossipQuant3} with quantizer $\qd$. Then, there exists $\Tcon\in \integernonnegative$ such that, for all $t\ge\Tcon,$
\begin{equation}\label{eq:conv-det-part}
   {|x_i(t)-x_j(t)|}\leq 1\quad \forall\,i,j\in \until{N}
\end{equation}
and
$\|x(t)-\xave(0)\1 \|_\infty\leq 2.$
\end{theorem}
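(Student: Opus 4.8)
The plan is to reduce everything to the symbolic analysis already carried out for the compensating rule. As the lines preceding the statement show, writing $n_i(t)=\lfloor 2x_i(t)\rfloor$ the partially quantized deterministic update induces exactly the map $\gii$ of \eqref{evolution-automaton-p}. Hence Theorem~\ref{theo:ConvInM} applies verbatim: almost surely there is a $\Tcon$ such that $n(t)\in\RR$ for all $t\ge\Tcon$, i.e.\ there is $\alpha\in\integers$ with $n_i(t)\in\{\alpha,\alpha+1\}$ for every $i$. Since $n_i(t)=\lfloor 2x_i(t)\rfloor$ forces $x_i(t)\in[\tfrac{n_i(t)}{2},\tfrac{n_i(t)+1}{2})$, all states lie in the common interval $[\tfrac{\alpha}{2},\tfrac{\alpha}{2}+1)$ of length one, which gives \eqref{eq:conv-det-part} exactly as in Corollary~\ref{corol:SymbConv}.

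It remains to locate this length-one interval relative to $\xave(0)$, and this is the only genuinely new point compared with the compensating case. Here the update does not preserve $\xave(t)$, so I cannot simply invoke the fact that $\xave(0)$ lies inside the interval. The replacement invariant I would use is the symbolic sum: a direct check gives $\lfloor h/2\rfloor+\lceil k/2\rceil+\lfloor k/2\rfloor+\lceil h/2\rceil=h+k$, so $\gii$ preserves $\1^*n(t)$, and therefore $\1^*n(t)=\1^*n(0)=\sum_i\lfloor 2x_i(0)\rfloor$ for all $t$. Although the real average drifts, this discrete average is frozen.

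To exploit this, write $2x_i(t)=n_i(t)+\theta_i(t)$ with $\theta_i(t)\in[0,1)$. Conservation of $\1^*n(t)$ yields $\xave(t)=\tfrac{1}{2N}\1^*n(0)+\tfrac{1}{2N}\sum_i\theta_i(t)$, and the same identity at $t=0$ gives $\xave(0)=\tfrac{1}{2N}\1^*n(0)+\tfrac{1}{2N}\sum_i\theta_i(0)$, so that $|\xave(t)-\xave(0)|<\tfrac12$. On the other hand, for $t\ge\Tcon$ all the $x_i(t)$, hence also their mean $\xave(t)$, lie in the same interval of length one, whence $|x_i(t)-\xave(t)|\le 1$. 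The triangle inequality then bounds $|x_i(t)-\xave(0)|$ by $\tfrac32$, comfortably within the claimed value $2$.

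The main obstacle is the step of the second paragraph: the partially quantized rule sacrifices the exact invariance of the real average that made Corollary~\ref{corol:SymbConv} immediate, so one must supply a substitute conserved quantity. Conservation of the symbolic sum $\1^*n(t)$ under $\gii$ plays exactly that role, pinning the consensus location to $\xave(0)$ up to the rounding slack introduced by the floors; everything else is a transcription of the analysis of \eqref{eq:GossipQuant2d}.
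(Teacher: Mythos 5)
Your proof is correct and follows essentially the same route as the paper's: reduce the partially quantized deterministic update to the symbolic dynamics $\gii$, invoke the compensating-rule analysis (Theorem~\ref{theo:ConvInM}) for \eqref{eq:conv-det-part}, and replace the lost conservation of the real average by conservation of the symbolic sum $\1^*n(t)$ under $\gii$, finishing with a triangle inequality. Your one-sided bookkeeping of the rounding errors (writing $2x_i(t)=n_i(t)+\theta_i(t)$ with $\theta_i(t)\in[0,1)$) even gives the slightly sharper bound $3/2$ where the paper, using two separate bounds of $1/2$ on $|\tfrac{1}{2}\nave-\xave|$, settles for $2$.
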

\begin{proof}
The inequality~\eqref{eq:conv-det-part} can be derived as \eqref{eq:conv-det-comp}, since the symbolic dynamics is the same in both cases. Moreover, since for any integer $k$, it holds that $\lfloor k/2 \rfloor+ \lceil k/2 \rceil=k$, we have that, given the definitions above, $n_i(t+1)+n_j(t+1)=n_i(t)+n_j(t)$ for any time $t$ and any chosen pair $(i,j)$. Hence, the symbolic dynamics preserves the average of the symbolic states, even if the continuous dynamics does not. We then define $\nave(t)=\frac{1}{N}\sum_{i=1}^{N}n_i(t)$, and remark that for any $t\ge0$,
\begin{align*}
\left|\frac{1}{2}\nave(t)-\xave(t)\right|=& \left|\frac{1}{2}\frac{1}{N}\sum_{i=1}^{N}n_i(t)-\frac{1}{N}\sum_{i=1}^{N}x_i(t)\right|\\
=&\frac{1}{N}\left|\sum_{i=1}^{N}\left(\frac{1}{2}n_i(t)-x_i(t)\right)\right|\\
\le&\frac{1}{N}\sum_{i=1}^{N}\left|\frac{1}{2}n_i(t)-x_i(t)\right|\\
=&\frac{1}{N}\sum_{i=1}^{N}\left|\frac{1}{2}\lfloor 2x_i(t)\rfloor-x_i(t)\right|\\
=&\frac{1}{N}\sum_{i=1}^{N}\frac{1}{2}\left|\lfloor 2x_i(t)\rfloor-2x_i(t)\right|\\
\le& \frac{1}{2},
\end{align*}
since $\nave(t)=\nave(0)$. Hence, for $t\ge \Tcon$,
\begin{align*}|\xave(t)-\xave(0)|\le
&\left|\xave(t)-\frac{1}{2}\nave(t)\right|+\left|\frac{1}{2}\nave(t)-\frac{1}{2}\nave(0)\right|+\left|\frac{1}{2}\nave(0)-\xave(0)\right|
\le 1.\end{align*}
This in turn implies, together with \eqref{eq:conv-det-part}, that $\|x(t)-\xave(0)\1 \|_\infty\leq 2.$
\end{proof}

\subsection{Probabilistic quantizer}
The symbolic analysis is again useful to provide a convergence result when the probabilistic quantizer is used, and then~\eqref{eq:GossipQuant3} becomes
\begin{equation}\label{eq:GossipQuant3p}
x_i(t+1)=\frac{1}{2}x_i(t)+\frac{1}{2} \qp(x_j(t)).\end{equation}

\begin{theorem}\label{th:part-qp-both}
Consider the algorithm \eqref{eq:GossipQuant3p}. Then, 
it exists an integer random variable $\alpha$, such that $\Exp[\alpha]=\xave(0)$, and almost surely
$$\lim_{t\to\infty}x_i(t)=\alpha,$$
for all $i\in\until{N}$.
\end{theorem}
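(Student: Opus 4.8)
The plan is to combine a martingale analysis of the running average with a second-moment (energy) computation, and to extract almost sure convergence from the summability of two nonnegative series rather than from any naive contraction. Throughout, let $\mathcal{F}_t$ denote the information generated by the edge selections and quantizations up to time $t$, and write $e_k(t)=\qp(x_k(t))-x_k(t)$ and $v_k(t)=\Exp[e_k(t)^2\mid\mathcal{F}_t]$. By Lemma~\ref{lem:PropQuantProb} one has $\Exp[e_k(t)\mid\mathcal{F}_t]=0$ and $0\le v_k(t)\le 1/4$, with $v_k(t)=0$ exactly when $x_k(t)\in\integers$; by Assumption~\ref{ass:Assindip} the errors $e_i(t),e_j(t)$ on the two active nodes are conditionally independent.

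First I would establish boundedness and the martingale property of $\xave$. When edge $(i,j)$ is selected one has $x_i(t+1)=\tfrac12(x_i(t)+x_j(t))+\tfrac12 e_j(t)$ and symmetrically for $j$, from which $\lceil\max_k x_k(t)\rceil$ is nonincreasing and $\lfloor\min_k x_k(t)\rfloor$ is nondecreasing; hence all states remain in the fixed compact interval $[\lfloor\min_k x_k(0)\rfloor,\lceil\max_k x_k(0)\rceil]$. A direct computation gives $\Exp[\xave(t+1)\mid\mathcal{F}_t]=\xave(t)$, so $\xave(t)$ is a bounded martingale and thus converges almost surely and in $L^2$ to an integrable limit $\alpha$ with $\Exp[\alpha]=\xave(0)$. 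Orthogonality of the increments yields $\sum_{t\ge0}\Exp[(\xave(t+1)-\xave(t))^2]<\infty$, and since $\Exp[(\xave(t+1)-\xave(t))^2\mid\mathcal{F}_t]=\tfrac{1}{4N^2}\sum_k d_k v_k(t)$ with $d_k=\sum_{e\ni k}W^{(e)}>0$, this already forces $\sum_{t\ge0}\Exp\big[\sum_k v_k(t)\big]<\infty$. Consequently $\sum_k v_k(t)\to0$ almost surely, i.e.\ every state approaches $\integers$.

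Next I would run the same bookkeeping on the disagreement $D(t)=\sum_k x_k(t)^2-N\xave(t)^2$. The key identity is $\Exp[\Delta D\mid\mathcal{F}_t]=-\tfrac12\sum_{(i,j)\in\E}W^{(i,j)}(x_i(t)-x_j(t))^2+\tfrac14(1-\tfrac1N)\sum_k d_k v_k(t)$. The positive term is summable in expectation by the previous paragraph, so $D(t)$ is a nonnegative supermartingale perturbed by a term of summable expectation; Robbins--Siegmund (or a direct argument) then gives that $D(t)$ converges almost surely and that $\sum_{t\ge0}\sum_{(i,j)\in\E}W^{(i,j)}(x_i(t)-x_j(t))^2<\infty$ almost surely. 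Since every $W^{(i,j)}>0$, this forces $x_i(t)-x_j(t)\to0$ on each edge, and connectedness of $\G$ propagates agreement to all pairs. Combining $x_i(t)-x_j(t)\to0$ with $\mathrm{dist}(x_k(t),\integers)\to0$ shows that the nearest integers to the states coincide for large $t$ and stabilise to a common random integer; hence $x_k(t)\to\alpha\in\integers$ for all $k$, and passing $\xave(t)\to\alpha$ through $\Exp[\xave(t)]=\xave(0)$ gives $\Exp[\alpha]=\xave(0)$.

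The main obstacle is that the quantization noise never switches off a priori: the disagreement $D$ is \emph{not} a supermartingale on its own, because its drift carries the positive term $\tfrac14(1-\tfrac1N)\sum_k d_k v_k$, so consensus cannot be deduced by a bare contraction estimate. The decisive trick is to obtain summability of the total quantization variance $\sum_k v_k(t)$ for free from the $L^2$-boundedness of the average martingale $\xave(t)$, which is precisely what couples ``the states reach the integer grid'' to ``the average is conserved in expectation.'' A secondary point requiring care is the last step, where near-integer-ness and pairwise agreement must be combined to force a \emph{common} integer limit, ruling out a spurious split in which different nodes approach different integers; this is excluded exactly because the edge disagreements are summable.
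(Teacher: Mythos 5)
Your proposal is correct, but it takes a genuinely different route from the paper's. The paper never leaves its symbolic-dynamics framework: setting $n_i(t)=\lfloor 2x_i(t)\rfloor$, it shows that the partially quantized update with $\qp$ induces the same pair map $\gii$ as the compensating rule, perturbed by an integer noise $(e_j(t),e_i(t))$ that vanishes with probability at least $1/4$ at each step; a finite-state argument in the spirit of Theorems~\ref{theo:ConvInM} and~\ref{th:naivedet} then yields a finite random time $\Tcon$ after which $n(t)=2\alpha\1$, and the identity $2x_i(t+1)-\lfloor 2x_i(t+1)\rfloor=x_i(t)-\lfloor x_i(t)\rfloor$ shows that from $\Tcon$ onward the continuous dynamics is the deterministic contraction $x_i(t+1)=\frac{1}{2}(x_i(t)+\alpha)$, whence $x_i(t)\to\alpha$. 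You instead use purely probabilistic machinery: $\xave(t)$ is a bounded martingale, orthogonality of its increments makes the total quantization variance $\sum_k v_k(t)$ summable, and this summable perturbation feeds a Robbins--Siegmund argument on the disagreement $D(t)$, giving square-summable edge disagreements and states approaching $\integers$. I checked your key identities — $\Exp\bigl[(\xave(t+1)-\xave(t))^2\mid\mathcal{F}_t\bigr]=\frac{1}{4N^2}\sum_k d_k v_k(t)$ and the stated drift of $D$ — and they are exact; Assumption~\ref{ass:Assindip} and Lemma~\ref{lem:PropQuantProb} are invoked exactly where needed, and the boundedness argument is sound. As for what each approach buys: the paper's route yields the stronger structural conclusion that the quantized labels reach consensus in \emph{finite} time with an explicit contraction afterwards, and it reuses the machinery common to all the other update rules; your route is more robust — it does not rely on the gain $1/2$ making $\gii$ come out nicely, it applies verbatim to any unbiased quantizer with bounded conditional variance and extends to general averaging weights, and it sidesteps the delicate absorption step in the paper (the set $\A$ is not strictly invariant for the noisy symbolic dynamics, a point the paper treats tersely). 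One small repair to your last step: the phrase about nearest integers ``stabilising'' because edge disagreements are summable is more indirect than necessary; since you already have $\xave(t)\to\alpha$ a.s.\ and $x_k(t)-\xave(t)\to 0$ from the disagreement analysis, simply write $x_k(t)=\xave(t)+\bigl(x_k(t)-\xave(t)\bigr)\to\alpha$, conclude $\alpha\in\integers$ from $\mathrm{dist}(x_k(t),\integers)\to 0$, and obtain $\Exp[\alpha]=\xave(0)$ by bounded convergence.
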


\begin{proof}
We start our analysis by observing the following fact. Let $x\in \real$, and $n=\lfloor 2 x \rfloor$. If $x-\lfloor x\rfloor<1/2$, then
\begin{equation*}
\qp(x)=
\begin{cases}
\left\lceil\frac{n}{2}\right\rceil &\text{ with probability $\left\lceil x \right\rceil -x$}\\
\left\lceil\frac{n}{2}\right\rceil+1  &\text{ with probability $x-\left\lfloor x \right\rfloor$}
\end{cases}
\end{equation*}
otherwise, if $x-\lfloor x\rfloor\ge 1/2$, then
\begin{equation*}
\qp(x)=
\begin{cases}
\left\lceil\frac{n}{2}\right\rceil &\text{ with probability $x-\left\lfloor x \right\rfloor$}\\
\left\lceil\frac{n}{2}\right\rceil -1  &\text{ with probability $\left\lceil x \right\rceil -x$}
\end{cases}
\end{equation*}
Let now $n_i(t)$, for $i\in \until{N}$, be equal to $n_i(t)=\lfloor 2x_i(t)\rfloor$.
Recalling that $\lfloor x_i(t)\rfloor=\left\lfloor \frac{n_i(t)}{2} \right\rfloor$, from the observations above and \eqref{eq:GossipQuant3p}, it follows that the vector $n(t)$ satisfies the following recursive equation
\begin{align*}
n_i(t+1)&=\left\lfloor\frac{n_i(t)}{2}\right\rfloor+\left\lceil\frac{n_j(t)}{2}\right\rceil + e_j(t),\\
n_j(t+1)&=\left\lfloor\frac{n_j(t)}{2}\right\rfloor+\left\lceil\frac{n_i(t)}{2}\right\rceil + e_i(t),
\end{align*}
where
\begin{equation*}
e_k(t)=\begin{cases}
\begin{cases} 1 &\text{ with probability $x_k(t)-\lfloor x_k(t)\rfloor$}\\
0 &\text{ with probability $1-(x_k(t)-\lfloor x_k(t)\rfloor)$}
\end{cases}
& \text{if $n_k(t)$ is even} \\
\begin{cases}
-1 &\text{ with probability $1-(x_k(t)-\lfloor x_k(t)\rfloor)$}\\
0 &\text{ with probability $x_k(t)-\lfloor x_k(t)\rfloor$}
\end{cases}
& \text{if $n_k(t)$ is odd.}
\end{cases}
\end{equation*}
By reintroducing the map $\gii$ defined in \eqref{evolution-automaton-p}, we can write in a compact way that
\begin{align*}
(n_i(t+1),n_j(t+1))=\gii(n_i(t),n_j(t))+(e_j(t),e_i(t)).
\end{align*}
Note that $n_k(t)$ is even if and only if $x_k(t)-\lfloor x_k(t)\rfloor<1/2$. Then, we have that $\Pr(e_j(t)= 0)\geq \frac{1}{2}$, and hence $\Pr\left[(n_i(t+1),n_j(t+1))=\gii(n_i(t),n_j(t))\right]\ge \frac{1}{4}.$
Moreover, observe that, recalling the notations in the proof of Theorem~\ref{th:naivedet}, for all $i\in \until{N}$ and $t>0$,
\begin{equation*}
m(0)-r_{m(0)}\le n_i(t)\le M(0)+1-r_{M(0)}.\end{equation*}
This invariance property and the above connection with $\gii$ imply that almost surely it exists $\Tcon\in\integernonnegative$ such that $n(t)$ belongs to the set
$\A=\setdef{y\in \integers^N}{\exists \,\alpha\, \in\, \integers \mbox{ such that } y=2\alpha \1 }$
defined in \eqref{eq:SetR6bis}.

To conclude the proof, we remark that, for any $t\ge0$, and any $i\in\until{N}$,
$$2x_i(t+1)-\lfloor2 x_i(t+1)\rfloor=x_i(t)+\qp(x_j(t))-\lfloor x_i(t)+\qp(x_j(t))\rfloor
=x_i(t)-\lfloor x_i(t)\rfloor.$$
Equivalently, $2x_i(t+1)-n_i(t+1)=x_i(t)-\lfloor \frac{n_i(t)}{2}\rfloor.$
If now we assume $t\ge \Tcon$, then we have shown that almost surely $n_i(t+1)=n_i(t)=2\alpha$, for all $i\in\until{N}.$ This fact implies that the equality above reduces to $2x_i(t+1)-2\alpha=x_i(t)-\alpha$, which in turns yields that
\begin{equation*}
x_i(t+1)=\frac{1}{2}(x_i(t)+\alpha).
\end{equation*}
The latter deterministic dynamics implies that $\lim_{t\to\infty}x_i(t)=\alpha$, for all $i\in\until{N}.$ The argument is concluded recalling that the average is preserved in expectation.

\end{proof}


%
\section{Discussion}\label{sec:conclusions}

In this paper we studied the gossip algorithm for the consensus problem with quantized communication. We proposed three update rules, a {\it totally quantized}, a {\it partially quantized} and a {\it compensating} rule. In the first one, the agents use only quantized information in order to update their states. In the second one, they have access to exact (non-quantized) information regarding their own state, and use it for the update. In the third one, each agent uses both exact and quantized information about its own state, in order to ensure that the average of the states is globally preserved. This positive feature has the negative counterpart of preventing that consensus (all states be equal) be exactly reached. In all the algorithms we proposed, consensus can be approached up to the quantizer precision. Moreover, deviations from the average of the initial state are small, except in the case of the totally quantized rule with deterministic quantizer. We summarize these results in Table~\ref{table:Summary}. In the remaining of this section, we discuss some extensions and potential developments of our results.
\begin{table}[ht]
\begin{tabular}{|c|c|c|c|}
\hline
   & Totally Quantized & Compensating & Partially Quantized \\ [0.5ex]
\hline
 $\qd$ & \begin{tabular}{c}  Finite time conv. to consensus \\ Large error from average    \end{tabular} &
 \begin{tabular}{c}  Finite time conv. to \\ $\|x-\xave(0)\1\|_\infty\le 1/2$ \\ Average preserved   \end{tabular}
 & \begin{tabular}{c}  Finite time conv. to \\ $\|x-\xave(0)\1\|_\infty\le 2$ \\ Average not preserved   \end{tabular} \\
\hline
$\qp$
  &
  \begin{tabular}{c} Finite time conv. to consensus \\ Average preserved in expectation   \end{tabular}
  &
  \begin{tabular}{c} Asympt. conv. to \\ $N^{-1/2}\sqrt{\Exp[\|x-\xave(0)\1\|^2_2]}\le 1/2$\\ Average preserved \end{tabular}
  &
  \begin{tabular}{c} 
   Asympt. conv. to consensus \\ Average preserved in expectation \end{tabular}
  \\ [1ex] 
\hline
\end{tabular}
\caption{Summary of results.}
\label{table:Summary}
\end{table}
\subsection{Uniform quantizers}
In Section~\ref{sec:Statement} we introduced two significant quantizers, $\qd$ and $\qp$. The present section provides a brief discussion about the generality of our results and about the motivations for our choices. Here we shall consider a class of uniform quantizers, that is maps $\map{q}{\reals}{\integers}$ with the property that $q(x)\in\{\lfloor x\rfloor,\lceil x\rceil\}$.

Let us first consider {\em deterministic} uniform quantizers.
The results given for $\qd$, namely Corollaries~\ref{corol:SymbConv} and~\ref{cor:ConvPartiallyD}, and Theorem~\ref{thm:ConvNaiveD}, can be extended to the {\em rounding down} $x \mapsto\lfloor x\rfloor$ and the {\em rounding up} quantizer $x\mapsto \lceil x\rceil$, defining suitable symbolic dynamics on $n(t)$. Let us consider the rounding down, first. For the compensating strategy, the induced symbolic dynamics is
$$\gvi(h,k)=\left(\left\lfloor\frac{k}{2}\right\rfloor+\left\lceil\frac{h}{2}\right\rceil,\left\lfloor\frac{h}{2}\right\rfloor+\left\lceil\frac{k}{2}\right\rceil\right),$$
whereas for both the totally and the partially quantized strategy, it is
$$\gvii(h,k)=\left\lfloor\frac{h}{2}\right\rfloor+\left\lfloor\frac{k}{2}\right\rfloor.$$
Their analysis is analogous to the one in Theorems~\ref{th:naivedet} and~\ref{theo:ConvInM}, respectively.
The analysis of the rounding up quantizer easily follows if we notice that $\lceil x\rceil=-\lfloor -x\rfloor.$

Second, we consider {\em randomized} uniform quantizers, which generalize the probabilistic quantizer $\qp$. We observe that the proof of Theorem~\ref{th:naivedet2} is based on the fact that for all $a\in \Z$,
\begin{equation}\label{eq:QuantFamily}
\qp(a+1/2)=\left\{
\begin{array}{cc}
a+1 & \mbox{ with probability } p \\
a & \mbox{ with probability } 1-p,
\end{array}\right.
\end{equation}
with $p=1/2$. Actually, the argument is valid for any $p\in (0,1),$ and hence Corollary~\ref{corol:tot-det} can be extended to any uniform quantizer satisfying Equation~\eqref{eq:QuantFamily} with $p\in(0,1).$
On the other hand, the mean squared error results in Section~\ref{sec:PartQA-qp} are a direct consequence of the statistical properties of $\qp$ stated in Lemma~\ref{lem:PropQuantProb}: they thus hold for any quantizer sharing these properties. In particular, both for Section~\ref{sec:PartQA-qp} and for Theorem~\ref{th:part-qp-both}, the randomized quantizer needs to be unbiased. The probabilistic quantizer $\qp$ is the {\em only} unbiased uniform quantizer, and hence it is the only one to which all our arguments apply. This justifies our choice of the specific probabilistic quantizer $\qp$.

\subsection{Weighted averaging}
For simplicity reasons we have chosen to consider consensus gains equal to $1/2$. Hence, although we believe that the case we studied already shows the significant features of the problem, it is natural to ask whether our results can be extended to general choices of weights, considering an update matrix
\begin{equation}\label{eq:eps-update}P(t)=I-\eps E_{ij},\end{equation} instead of the matrix~\eqref{eq:defPt}.
Such extension is likely to be possible, modulo solving some technical difficulties in the definition and analysis of the suitable symbolic dynamics. For instance, we conjectured in \cite{PF-RC-FF-SZ:08cdc} that Corollary~\ref{corol:SymbConv} apply to the compensating rule with deterministic quantizers and general update \eqref{eq:eps-update}, provided $\eps$ is a rational number in $(0,1/2]$. This fact has actually been proven for any $\eps\in (0,1/2]$ in the pair of recent conference papers \cite{JL-RM:09a,JL-RM:09b}.

\subsection{Speed of convergence}
Our paper has been devoted to prove convergence results for a set of algorithms. After proving convergence, the second analysis issue is studying the speed of convergence of the algorithms or, equivalently, the time needed to reach (or to approach in a suitable sense) their limit states. The non-quantized gossip algorithm \cite{SB-AG-BP-DS:06} is known to asymptotically converge (in a mean squared sense) at exponential speed, with a rate which depends on the matrix $W.$ It is thus natural to conjecture that the convergence of the quantized version be roughly exponential, as long as the differences among states are much larger than the quantization step: results in this sense have been given in \cite{PF-RC-FF-SZ:08cdc}. This belief is confirmed by simulations, as those we report in Figure~\ref{fig:RatesComparison}.

\begin{figure}[ht]
\begin{center}
\includegraphics[width=.49 \textwidth]{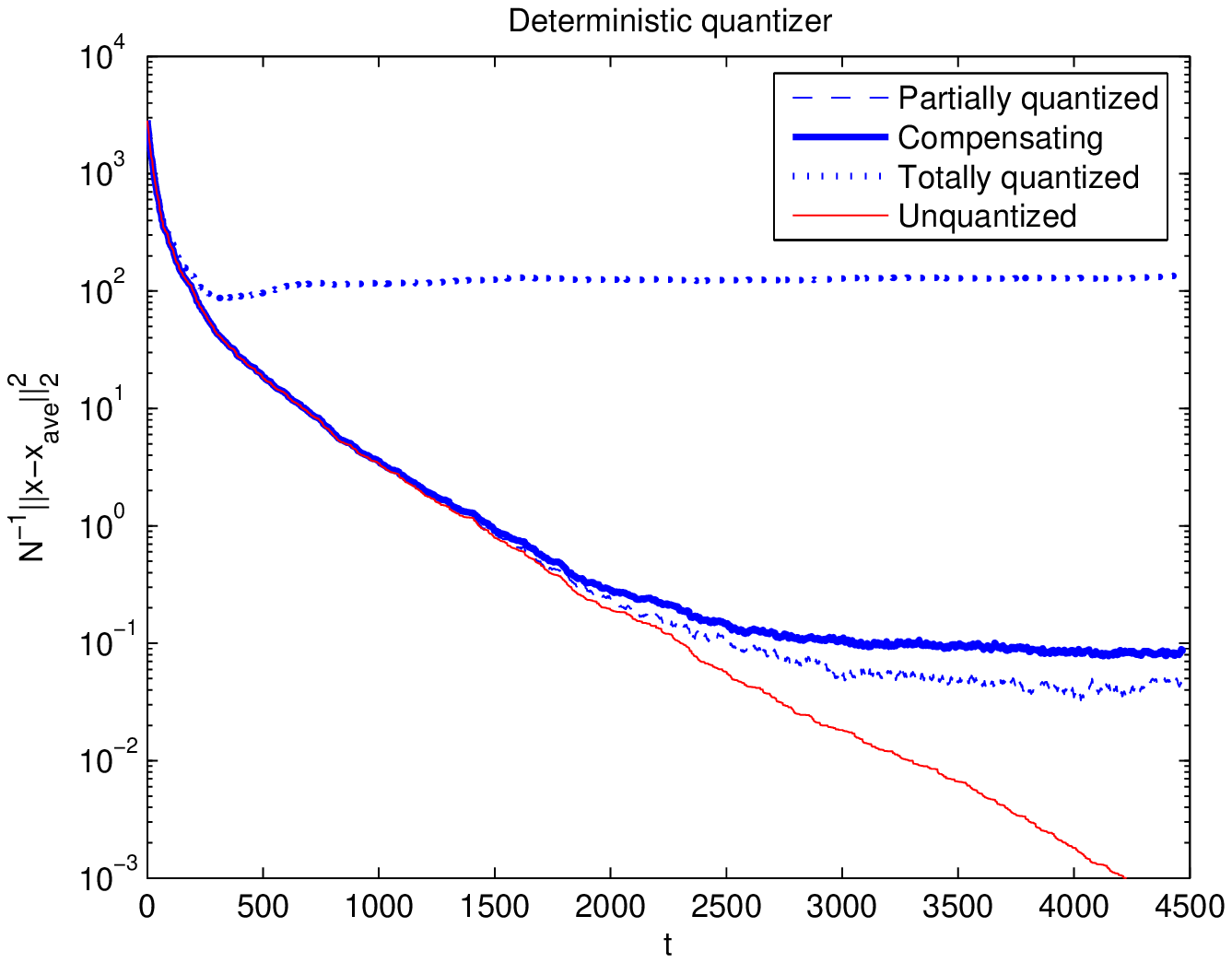}
\includegraphics[width=.49 \textwidth]{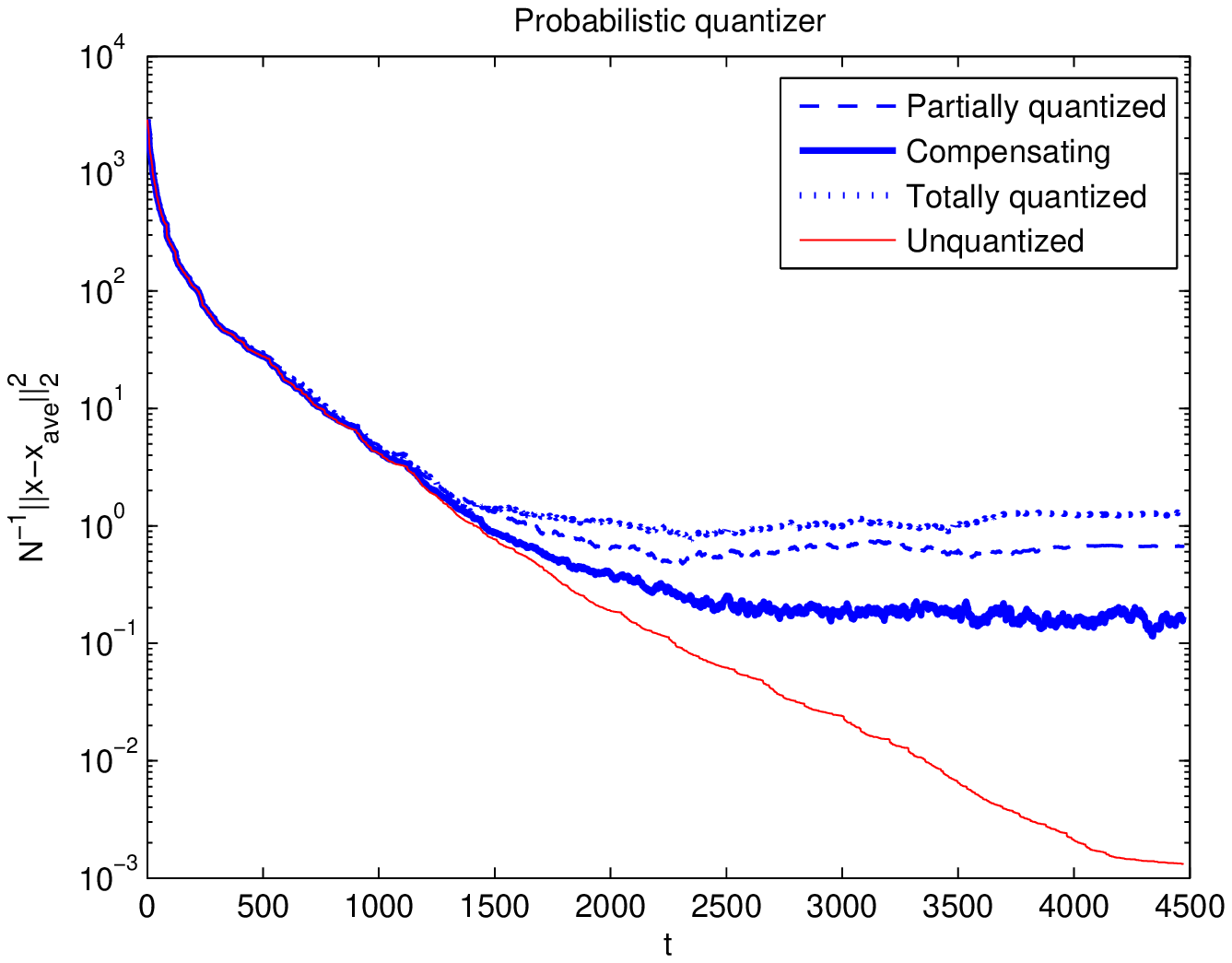}
  \caption{Evolution of the squared distance from the initial average for the three algorithms presented in the paper, compared with the standard gossip algorithm. Average of 10 runs on a geometric graph of $N=20$ nodes. Random uniform initial condition with $x_i(0)\in [-100,100]$ for all $i\in \until{N}.$  }
 \label{fig:RatesComparison}\end{center}
 \end{figure}

However, as we have shown in this paper, the granularity effects eventually come out in the convergence, making the systems converge in finite time to some limit set. Upper bounds on this convergence time can be found by the Markov chains techniques used in \cite{AK-TB-RS:07}, and recently in \cite{JL-RM:09b} and \cite{MZ-SM:08a}. Such bounds are usually very conservative in terms of their dependence on the number of agents: for this reason, we do not pursue such analysis here.
In our opinion, it would be interesting to better understand the transition in the behavior of the algorithm: what is missing is a sharp result quantifying {\em when} the granularity effects become non-negligible, and which is the speed of convergence of the system during such transition.

\bibliographystyle{plain}
\bibliography{aliasFrasca,RefFrasca,PF}

\end{document}